\theoremstyle{theorem}
\newtheorem{theorem}{Theorem}
\newtheorem{corollary}[theorem]{Corollary}
\newtheorem{lemma}[theorem]{Lemma}
\newtheorem{proposition}[theorem]{Proposition}
\theoremstyle{definition}
\newcommand{\tr}{\mathrm{tr}\,}
\newcommand{\vy}[1]{{\left\vert\kern-0.25ex\left\vert\kern-0.25ex\left\vert #1 
    \right\vert\kern-0.25ex\right\vert\kern-0.25ex\right\vert}}
\title[Mixing properties of matrix equilibrium states]{A necessary and sufficient condition for a matrix equilibrium state to be mixing}
\begin{document}
\author{Ian D. Morris}

\address{Department of Mathematics, University of Surrey, Guildford GU2 7XH, U.K.}
\email{i.morris@surrey.ac.uk}
\begin{abstract}
Since the 1970s there has been a rich theory of equilibrium states over shift spaces associated to H\"older-continuous real-valued potentials. The construction of equilibrium states associated to matrix-valued potentials is much more recent, with a complete description of such equilibrium states being achieved in 2011 by D.-J. Feng and A. K\"aenm\"aki. In a recent article the author investigated the ergodic-theoretic properties of these matrix equilibrium states, attempting in particular to give necessary and sufficient conditions for mixing, positive entropy, and the property of being a Bernoulli measure with respect to the natural partition, in terms of the algebraic properties of the semigroup generated by the matrices. Necessary and sufficient conditions were successfully established for the latter two properties but only a sufficient condition for mixing was given. The purpose of this note is to complete that investigation by giving a necessary and sufficient condition for a matrix equilibrium state to be mixing.
 \end{abstract}
\maketitle
\section{Introduction}

For each $N \geq 2$ let us define $\Sigma_N:=\{1,\ldots,N\}^{\mathbb{N}}$ and equip this space with the infinite product topology. Let $\sigma \colon \Sigma_N \to \Sigma_N$ denote the shift transformation $\sigma[(x_i)_{i=1}^\infty]:=(x_i)_{i=1}^\infty$, which is a continuous transformation of $\Sigma_N$, and let $\mathcal{M}_\sigma$ denote the set of all $\sigma$-invariant Borel probability measures on $\Sigma_N$. If $f \colon \Sigma_N \to \mathbb{R}$ is a continuous function we say that $\mu \in \mathcal{M}_\sigma$ is an equilibrium state for $f$ if it satisfies $h(\mu)+\int f \,d\mu \geq h(\nu)+\int f\,d\nu$ for all $\nu \in \mathcal{M}_\sigma$, where $h$ denotes metric entropy. If $f$ is H\"older continuous with respect to a natural metric on $\Sigma_N$ then it is known that $f$ has a unique equilibrium state, and this equilibrium state enjoys numerous regularity properties such as full support, mixing, positive entropy and a Gibbs inequality; see \cite{PaPo90} for details. 

Equilibrium states of continuous functions $f \colon \Sigma_N \to \mathbb{R}$ have been extensively investigated since the 1970s due to their broad and deep applications to smooth ergodic theory. More recently interest has developed in equilibrium states defined by matrix-valued potentials, primarily due to their applications in the dimension theory of self-affine fractals where they play a pivotal role, particularly in the planar case (see e.g. \cite{BaKaKo16,FaKe16,HuLa95,MoSh16}). Given an $N$-tuple of $d \times d$ matrices $\mathsf{A}=(A_1,\ldots,A_N)$ and a real number $s>0$ we define the \emph{pressure of $\mathsf{A}$ with parameter $s$} to be the limit
\begin{equation}\label{eq:pressure}P(\mathsf{A},s):=\lim_{n\to\infty}\frac{1}{n}\log \left(\sum_{i_1,\ldots,i_n=1}^N \|A_{i_n}\cdots A_{i_1}\|^s\right) \in [-\infty,+\infty)\end{equation}
which exists by subadditivity. For every $\mu \in \mathcal{M}_\sigma$ we define the \emph{top Lyapunov exponent of $\mathsf{A}$ with respect to $\mu$} to be the quantity
\begin{align}\label{eq:lyap}\Lambda(\mathsf{A},\mu)&:=\lim_{n\to\infty}\frac{1}{n}\int \log\|A_{x_n}\cdots A_{x_1}\|d\mu\left[(x_i)_{i=1}^\infty\right]\\\nonumber
&=\inf_{n \geq 1}\frac{1}{n}\int \log\|A_{x_n}\cdots A_{x_1}\|d\mu\left[(x_i)_{i=1}^\infty\right]\end{align}
again using subadditivity. By the subadditive variational principle \cite{CaFeHu08} we have
\[P(\mathsf{A},s)=\sup_{\mu \in \mathcal{M}_\sigma}\left[ h(\mu)+s\Lambda(\mathsf{A},\mu)\right]\]
for every $s>0$ and every $N$-tuple of $d \times d$ matrices $\mathsf{A}$. If a measure $\mu$ attains this supremum then it is called an \emph{equilibrium state} of $(\mathsf{A},s)$. Since the entropy and Lyapunov exponent are both affine functions of the measure $\mu$ when all other parameters are fixed, the set of all equilibrium states of $(\mathsf{A},s)$ is convex. Furthermore it is clear that a linear combination of invariant probability measures $\sum_{i=1}^m \alpha_i\mu_i \in \mathcal{M}_\sigma$ can only be an equilibrium state for a given $(\mathsf{A},s)$ if all of its components $\mu_i$ are equilibrium states for $(\mathsf{A},s)$.

Throughout this article we let $M_d(\mathbb{R})$ denote the vector space of all $d \times d$ real matrices and $\|\cdot\|$ the Euclidean norm. We say that a tuple $\mathsf{A}=(A_1,\ldots,A_N)\in M_d(\mathbb{R})^N$ is \emph{irreducible} if there does not exist a proper nonzero linear subspace $U$ of $\mathbb{R}^d$ such that $A_iU \subseteq U$ for every $i=1,\ldots,N$. If such a subspace exists then we call $U$ an $\mathsf{A}$-invariant subspace and say that $\mathsf{A}$ is \emph{reducible}. In a similar manner we will say that a subset of $M_d(\mathbb{R})$ is reducible if all its elements preserve a common nonzero proper linear subspace of $\mathbb{R}^d$, and irreducible otherwise. The basic properties of equilibrium states of $\mathsf{A}=(A_1,\ldots,A_N)\in M_d(\mathbb{R})^N$ are described by the following result of D.-J. Feng and A. K\"aenmaki \cite{FeKa10}:
\begin{theorem}\label{th:fk}
Let $\mathsf{A}=(A_1,\ldots,A_N) \in M_d(\mathbb{R})$ and $s>0$. If $\mathsf{A}$ is irreducible then $P(\mathsf{A},s)>-\infty$, there exists a unique equilibrium state $\mu \in \mathcal{M}_\sigma$ of $(\mathsf{A},s)$, and $\mu$ is ergodic with respect to $\sigma$. Furthermore there exists a constant $C>0$ such that for every $x_1,\ldots,x_n \in \{1,\ldots,N\}$
\[C^{-1}\mu([x_1\cdots x_n])\leq \frac{\|A_{x_n}\cdots A_{x_1}\|^s}{e^{nP(\mathsf{A},s)}} \leq C\mu([x_1\cdots x_n])\]
where
\[[x_1\cdots x_n]:=\left\{(z_i)_{i=1}^\infty \in \Sigma_N \colon x_i=z_i \text{ for all }i=1,\ldots,n\right\}.\]
If $\mathsf{A}$ is reducible and $P(\mathsf{A},s)>-\infty$ then there exist an invertible matrix $X$, natural numbers $d_1,\ldots,d_\ell$ such that $\sum_{i=1}^\ell d_i=d$, and matrices $A_i^{(j,k)}$ such that 
\[
X^{-1}A_iX = \left(\begin{array}{ccccc}
A_i^{(1,1)} & A_i^{(1,2)} & A_i^{(1,3)} & \cdots &A_i^{(1,\ell)}\\
0 & A_i^{(2,2)} & A_i^{(2,3)} & \cdots &A_i^{(2,\ell)}\\
0 & 0 & A_i^{(3,3)} & \cdots &A_i^{(3,\ell)}\\
\vdots & \vdots & \vdots  & \ddots &\vdots \\
0 & 0 & 0  & \cdots &A_i^{(\ell,\ell)}
\end{array}\right)\]
for each $i=1,\ldots,N$, where each  matrix $A_i^{(j,k)}$ has dimensions $d_j \times d_k$ and for each $j=1,\ldots,\ell$ the tuple $\mathsf{A}^{(j)}=(A_1^{(j,j)},\ldots,A_N^{(j,j)})\in M_{d_j}(\mathbb{R})^N$ is irreducible. The ergodic equilibrium states of $(\mathsf{A},s)$ are precisely the equilibrium states of those tuples $(\mathsf{A}^{(i)},s)$ such that $P(\mathsf{A}^{(i)},s)=P(\mathsf{A},s)$, and at least one tuple with this property exists. In particular the number of ergodic equilibrium states of $(\mathsf{A},s)$ is at least $1$ and is at most $\ell$.
\end{theorem}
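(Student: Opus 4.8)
The plan is to settle the irreducible case first and then reduce the general case to it by means of a composition series together with the variational principle quoted above; throughout write $A_{\mathtt i}:=A_{i_n}\cdots A_{i_1}$ for a word $\mathtt i=i_1\cdots i_n$. \emph{The irreducible case.} The engine is a \emph{quasi-multiplicativity} property of an irreducible tuple: there exist a constant $c>0$ and a finite set $\Omega$ of finite words such that for all finite words $\mathtt i,\mathtt j$ there is $\mathtt k\in\Omega$ with
\[\|A_{\mathtt i}A_{\mathtt k}A_{\mathtt j}\|\ \geq\ c\,\|A_{\mathtt i}\|\,\|A_{\mathtt j}\|.\]
I would prove this by a compactness argument: the norm of $A_{\mathtt i}$ is realised along a distinguished pair of unit vectors, so if no uniform lower bound existed one could extract a configuration of vectors that is essentially annihilated by every element of the semigroup generated by $A_1,\dots,A_N$, and the span of its images under that semigroup would then be a proper nonzero common invariant subspace, contradicting irreducibility; passing from ``some connecting word for each configuration'' to ``one finite $\Omega$ that works uniformly'' is then routine compactness on projective space. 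Granting the inequality, raising it to the power $s$ and summing over all $\mathtt i$ of length $m$ and all $\mathtt j$ of length $n$ makes $p\mapsto\log\sum_{|\mathtt w|=p}\|A_{\mathtt w}\|^{s}$ supermultiplicative up to a bounded additive error and a bounded shift of $p$; together with its evident subadditivity this yields $P(\mathsf A,s)>-\infty$.

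To produce the equilibrium state, let $\nu_n$ be the probability measure assigning each length-$n$ cylinder $[\mathtt i]$ mass proportional to $\|A_{\mathtt i}\|^{s}$, and let $\mu$ be a weak-$*$ limit point of the Ces\`aro averages $\tfrac1n\sum_{k=0}^{n-1}\sigma^{k}_{*}\nu_n\in\mathcal M_\sigma$. A standard entropy estimate from subadditive thermodynamic formalism --- using the subadditivity of $\mathtt i\mapsto\log\|A_{\mathtt i}\|$ and the upper semicontinuity of metric entropy --- gives $h(\mu)+s\Lambda(\mathsf A,\mu)\geq P(\mathsf A,s)$, so $\mu$ is an equilibrium state. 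Quasi-multiplicativity then produces the two-sided Gibbs bound: the upper bound by comparing the mass of $[\mathtt i]$ with the total mass of $\nu_n$, and the lower bound because quasi-multiplicativity supplies the matching supermultiplicativity of $p\mapsto\sum_{|\mathtt w|=p}\|A_{\mathtt w}\|^{s}$. The two-sided bound makes $\mu$ quasi-Bernoulli from below --- $\mu([\mathtt u\mathtt k\mathtt v])\geq c'\,\mu([\mathtt u])\,\mu([\mathtt v])$ for a suitable $\mathtt k$ in the finite set --- which forces ergodicity. It also drives uniqueness: every equilibrium state, and hence (by the affineness of entropy and of the Lyapunov exponent noted in the introduction) each of its ergodic components, must obey the two-sided bound as well; one first checks by a relative-entropy computation that $\tfrac1n D_n(\mu'\,\|\,\mu)\to0$ for every equilibrium state $\mu'$, where $D_n$ is the relative entropy restricted to length-$n$ cylinders, and then upgrades using the quasi-Bernoulli property, after which any two ergodic equilibrium states are mutually absolutely continuous with densities bounded above and below and so coincide. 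This establishes the first half of the theorem.

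\emph{The reducible case.} Assume $P(\mathsf A,s)>-\infty$. A composition series of $\mathbb R^{d}$ as an $\mathsf A$-module --- a flag $0=V_0\subset\cdots\subset V_\ell=\mathbb R^{d}$ of $\mathsf A$-invariant subspaces whose successive quotients $V_j/V_{j-1}$ are irreducible and of dimensions $d_1,\dots,d_\ell$ --- together with a basis adapted to it, furnished by an invertible $X$, produces precisely the displayed block upper-triangular form, with each diagonal tuple $\mathsf A^{(j)}$ (the action on $V_j/V_{j-1}$) irreducible. Since $X^{-1}A_{\mathtt i}X$ telescopes, $\|(X^{-1}A_{\mathtt i}X)^{(j,j)}\|\leq\|X\|\,\|X^{-1}\|\,\|A_{\mathtt i}\|$, whence $P(\mathsf A^{(j)},s)\leq P(\mathsf A,s)$ and $\Lambda(\mathsf A^{(j)},\mu)\leq\Lambda(\mathsf A,\mu)$ for every $j$ and every $\mu$. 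In the other direction I would invoke the classical fact that for a block upper-triangular cocycle and an \emph{ergodic} $\mu$ one has $\Lambda(\mathsf A,\mu)=\max_j\Lambda(\mathsf A^{(j)},\mu)$: the off-diagonal blocks cannot outgrow the diagonal, because $(A_{\mathtt i})^{(j,k)}$ is a sum of at most polynomially many in $|\mathtt i|$ alternating products of diagonal-block products (of total length $|\mathtt i|$) with bounded off-diagonal factors interspersed. Taking suprema over ergodic measures in the variational principle,
\[P(\mathsf A,s)=\sup_{\mu}\bigl[h(\mu)+s\max_j\Lambda(\mathsf A^{(j)},\mu)\bigr]=\sup_{\mu}\max_j\bigl[h(\mu)+s\Lambda(\mathsf A^{(j)},\mu)\bigr]\leq\max_j P(\mathsf A^{(j)},s),\]
and with the trivial bound above this gives $P(\mathsf A,s)=\max_j P(\mathsf A^{(j)},s)>-\infty$, so at least one block attains the maximum. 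If $\mu$ is the unique equilibrium state of $(\mathsf A^{(j)},s)$ for a maximising $j$, then $h(\mu)+s\Lambda(\mathsf A,\mu)\geq h(\mu)+s\Lambda(\mathsf A^{(j)},\mu)=P(\mathsf A,s)$, which the variational principle forces to be an equality, so $\mu$ is an ergodic equilibrium state of $(\mathsf A,s)$. Conversely, given an ergodic equilibrium state $\mu$ of $(\mathsf A,s)$ and an index $j_0$ with $\Lambda(\mathsf A,\mu)=\Lambda(\mathsf A^{(j_0)},\mu)$, the chain $h(\mu)+s\Lambda(\mathsf A^{(j_0)},\mu)=P(\mathsf A,s)\geq P(\mathsf A^{(j_0)},s)\geq h(\mu)+s\Lambda(\mathsf A^{(j_0)},\mu)$ is an equality throughout, so $\mu$ is the equilibrium state of $(\mathsf A^{(j_0)},s)$ and $P(\mathsf A^{(j_0)},s)=P(\mathsf A,s)$. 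Hence the ergodic equilibrium states of $(\mathsf A,s)$ are exactly those of the maximising diagonal blocks; each of the $\ell$ irreducible diagonal blocks supplies at most one, and at least one of them is maximising, so the number of ergodic equilibrium states lies between $1$ and $\ell$.

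The main obstacle is the quasi-multiplicativity lemma in the irreducible case: it is the one genuinely non-elementary ingredient, converting the qualitative hypothesis of irreducibility into a uniform quantitative statement on which finiteness of the pressure, the lower Gibbs bound, ergodicity and uniqueness all depend. The uniqueness argument is a close second, being delicate precisely because the direct relative-entropy estimate only yields $\tfrac1n D_n(\mu'\,\|\,\mu)\to0$ and one must feed in the quasi-Bernoulli lower bound to convert this into an actual identification of measures. In the reducible case the only non-routine input is the classical block-triangular identity for the top Lyapunov exponent; everything else there is bookkeeping with the variational principle and the block decomposition.
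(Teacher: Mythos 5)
This statement is not proved in the paper at all: it is the theorem of Feng and K\"aenm\"aki, quoted from \cite{FeKa10} as background, so there is no internal proof to compare your attempt against. Your outline reconstructs the standard argument of that reference along essentially the same lines --- Feng's quasi-multiplicativity lemma for irreducible tuples (proved by compactness from irreducibility) yielding finiteness of the pressure, a Gibbs equilibrium state built from weighted cylinder measures, ergodicity from the lower quasi-Bernoulli bound, uniqueness via the relative-entropy upgrade, and, in the reducible case, triangularization with irreducible diagonal blocks together with $P(\mathsf{A},s)=\max_j P(\mathsf{A}^{(j)},s)$ --- and it is sound at the level of detail given, with the genuinely delicate steps (quasi-multiplicativity, the passage from $\tfrac1n D_n\to 0$ to identification of measures, and the block-triangular Lyapunov exponent identity when some blocks may have exponent $-\infty$) correctly identified as the places where real work is required.
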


In the previous article \cite{Mo17} the author investigated the question of when the equilibrium state of a finite set of $d \times d$ matrices and a parameter $s>0$ has various ergodic-theoretic properties, in particular total ergodicity, mixing, full support, positive entropy, and the property of being a Bernoulli measure with respect to the natural partition. In most of these cases necessary and sufficient conditions for the property concerned were given in terms of the algebraic properties of the semigroup generated by the matrices. It was shown in that article that total ergodicity of matrix equilibrium states implies mixing, and that if the equilibrium state is not totally ergodic then there exists an integer $n>1$ such that the set
\[\{A_{i_n}\cdots A_{i_1}\colon 1 \leq i_1,\ldots,i_n \leq N\}\]
preserves a nonzero proper subspace of $\mathbb{R}^d$. However, a necessary and sufficient algebraic condition for mixing was not found. It was conjectured in \cite{Mo17} that if an equilibrium state associated to an irreducible tuple of matrices is ergodic but not totally ergodic with respect to $\sigma$, then it must fail to be ergodic with respect to $\sigma^d$. In this note we present a necessary and sufficient algebraic condition for an ergodic matrix equilibrium state to be mixing, and prove the aforementioned conjecture, both subject to the additional condition that at least one of the matrices is invertible. In the case where all of the matrices are not invertible we are not able to completely characterise mixing, but we are able to show in this case that the conjecture fails.

If $\mathsf{A} \in M_d(\mathbb{R})^N$ is irreducible and $s>0$, then by Theorem \ref{th:fk} there is a unique equilibrium state $\mu$ for $(\mathsf{A},s)$ and this equilibrium state is ergodic with respect to $\sigma$. For each $n\geq 1$ we let $\mathsf{A}_n$ denote the $N^n$-tuple of all products $A_{i_n}\cdots A_{i_1}$ listed in lexicographical order. A priori, by Theorem \ref{th:fk} we may find a finite collection of equilibrium states $\nu_1,\ldots,\nu_\ell$ for $(\mathsf{A}_n,s)$ which are measures defined on $\Sigma_{N^n}$ and are invariant with respect to the shift on $\Sigma_{N^n}$. However, in this article we will find it much more illuminating to identify these measures $\nu_1,\ldots,\nu_\ell$ with $\sigma^n$-invariant measures on the original space $\Sigma_N$ in the obvious fashion, and this identification will be presumed throughout much of the article. It is clear from the definition \eqref{eq:pressure} that $P(\mathsf{A}_n,s)=nP(\mathsf{A},s)$, and using \eqref{eq:lyap} it is not difficult to see that any equilibrium state of $(\mathsf{A},s)$ must, interpreted via the aforementioned identification, also be an equilibrium state of $(\mathsf{A}_n,s)$ for every $n\geq 1$. Conversely if an equilibrium state $\nu$ of $(\mathsf{A}_n,s)$ -- understood as a $\sigma^n$-invariant measure on $\Sigma_N$ -- is also invariant with respect to $\sigma$ then it must be an equilibrium state of $(\mathsf{A},s)$. If the unique equilibrium state $\mu$ of $(\mathsf{A},s)$ is not totally ergodic then for some $n>1$ it by definition fails to be ergodic with respect to $\sigma^n$, and its distinct ergodic components with respect to $\sigma^n$ then correspond to distinct $\sigma^n$-invariant equilibrium states of $(\mathsf{A}_n,s)$. By Theorem \ref{th:fk} it must therefore be the case that $\mathsf{A}_n$ is not irreducible. The failure of mixing for an equilibrium state of $(\mathsf{A},s)$ is thus intimately connected with the irreducibility properties of $\mathsf{A}_n$ for some critical integer $n>1$ and with the class of $\sigma^n$-invariant measures on $\Sigma_N$ which correspond to equilibrium states of $(\mathsf{A}_n,s)$. We see moreover that any such non-mixing equilibrium state $\mu$ must somehow be expressible as a $\sigma$-invariant average of the various $\sigma^n$-invariant equilibrium states of $(\mathsf{A}_n,s)$. These observations find their full realisation in the following theorem:
\begin{theorem}\label{th:one}
Let $\mathsf{A}=(A_1,\ldots,A_N)\in M_d(\mathbb{R})^N$ be irreducible and suppose that at least one of the matrices $A_i$ is invertible. Let $s>0$ and let $\mu$ be the unique equilibrium state of $(\mathsf{A},s)$. Then $\mu$ is \emph{not} mixing with respect to $\sigma$ if and only if the following properties hold. There exist integers $\ell>1$, $k \geq 1$ such that $k\ell=d$ and matrices $A_i^{(j)} \in M_k(\mathbb{R})$ for $i=1,\ldots,N$ and $j=1,\ldots,\ell$ and an invertible matrix $X \in M_d(\mathbb{R})$ such that
\begin{equation}\label{eq:form}X^{-1}A_iX = \left(\begin{array}{cccccc}
0&0&0& 0&\cdots & A_i^{(\ell)}\\
A_i^{(1)}&0&0&0& \cdots &0\\
0& A_i^{(2)}&0&0& \cdots &0\\
 0& 0& A_i^{(3)}&0& \cdots &0\\
\vdots&\vdots&\vdots&\ddots & \ddots&\vdots \\
 0&0& 0&\cdots &A_i^{(\ell-1)}&0
\end{array}\right)\end{equation}
for $i=1,\ldots,N$. For every $(i_1,\ldots,i_\ell)\in \{1,\ldots,N\}^\ell$ the matrix $X^{-1}A_{i_\ell}\cdots A_{i_1}X$ has the form
\begin{equation}\label{eq:ftoomch}\begin{medsize}
 \left(\begin{array}{cccc}
A_{i_\ell}^{(\ell)}A_{i_{\ell-1}}^{(\ell-1)} \cdots A_{i_2}^{(2)}A_{i_1}^{(1)}&0&\cdots &0\\
0 &  A_{i_\ell}^{(1)}A_{i_{\ell-1}}^{(\ell)} \cdots A_{i_2}^{(3)}A_{i_1}^{(2)}& \cdots &0\\
\vdots&\vdots &\ddots&\vdots \\
0 & 0& \cdots &A_{i_\ell}^{(\ell-1)}A_{i_{\ell-1}}^{(\ell-2)} \cdots A_{i_2}^{(1)} A_{i_1}^{(\ell)}
\end{array}\right).\end{medsize}\end{equation}For each $j=1,\ldots,\ell$ the set
\[\left\{A_{i_\ell}^{(j)} A_{i_{\ell-1}}^{(j-1)}\cdots A_{i_{\ell-j+1}}^{(1)}A_{i_{\ell-j}}^{(\ell)}\cdots A_{i_1}^{(j+1)} \colon 1 \leq i_1,\ldots,i_\ell \leq N\right\}\subset M_k(\mathbb{R})\]
is irreducible and $(\mathsf{A}_\ell,s)$ admits exactly $\ell$ distinct ergodic $\sigma^\ell$-invariant equilibrium states $\mu_1,\ldots,\mu_\ell$, one corresponding to each of the $\ell$ diagonal blocks in \eqref{eq:ftoomch}. The $\sigma$-invariant measure $\mu$ satisfies $\mu=\frac{1}{\ell}\sum_{i=1}^\ell \mu_i$ and in particular is not ergodic with respect to $\sigma^\ell$.
\end{theorem}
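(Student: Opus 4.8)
The plan is to prove the two implications separately: the ``if'' direction is immediate, while the ``only if'' direction is obtained by analysing the $\sigma^\ell$-ergodic decomposition of $\mu$ and applying the Gibbs property of Theorem~\ref{th:fk}.

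\emph{Sufficiency.} Suppose the listed properties hold. Then $\mu=\frac1\ell\sum_{i=1}^\ell\mu_i$ is a nontrivial convex combination of $\ell\ge 2$ distinct $\sigma^\ell$-ergodic measures, so $\mu$ is not ergodic with respect to $\sigma^\ell$, hence not totally ergodic, hence --- total ergodicity being necessary for mixing --- not mixing. The geometric conditions \eqref{eq:form}--\eqref{eq:ftoomch} and the irreducibility of the blocks play no role in this direction; they are recorded because they emerge from the converse.

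\emph{Necessity, reduction.} Assume $\mu$ is not mixing. By the result of \cite{Mo17} that total ergodicity implies mixing, $\mu$ is not totally ergodic, so $\mu$ fails to be $\sigma^n$-ergodic for some $n>1$; let $\ell>1$ be minimal with this property. Since $\sigma$ is ergodic it permutes the ergodic components of $\sigma^\ell$ transitively, hence cyclically, and a cycle of length $r\mid\ell$ with $r<\ell$ would contradict minimality, so $\sigma^\ell$ has exactly $\ell$ ergodic components $\mu_1,\dots,\mu_\ell$, each $\sigma^\ell$-ergodic, with $\sigma_*\mu_i=\mu_{i+1}$ modulo $\ell$ and $\mu=\frac1\ell\sum_i\mu_i$. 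Since $\mu$, read as a $\sigma^\ell$-invariant measure, is an equilibrium state of $(\mathsf{A}_\ell,s)$ and $h(\cdot)+s\Lambda(\mathsf{A}_\ell,\cdot)$ is affine, each $\mu_i$ is an equilibrium state of $(\mathsf{A}_\ell,s)$; having $\ell\ge 2$ distinct ergodic ones, $\mathsf{A}_\ell$ must by Theorem~\ref{th:fk} be reducible, and that theorem furnishes a block-triangular form for $\mathsf{A}_\ell$ whose irreducible diagonal blocks of maximal pressure are precisely some $\mathsf{C}^{(1)},\dots,\mathsf{C}^{(\ell)}$, with $\mu_i$ the unique equilibrium state of $\mathsf{C}^{(i)}$ and each of pressure $P(\mathsf{A}_\ell,s)=\ell P(\mathsf{A},s)$. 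Comparing the Gibbs bounds of Theorem~\ref{th:fk} for $\mu$ and for the $\mu_i$ on cylinders $[w]$ with $\ell\mid|w|$ gives $\|A_w\|^s\asymp\sum_i\|C^{(i)}_w\|^s$, and tracking $\sigma_*\mu_i=\mu_{i+1}$ through these bounds relates the $\mathsf{C}^{(i)}$ to one another cyclically --- here one must bear in mind that a shift by a single symbol changes which blocks of length $\ell$ a word is cut into.

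\emph{Necessity, the cyclic decomposition.} Relabel so that $A_1$ is invertible. For a suitably chosen nonzero proper $\mathsf{A}_\ell$-invariant subspace $V$ set $W_b:=\sum_{|w|=b}A_wV$ for $0\le b<\ell$; then $W_0=V$, each $A_i$ carries $W_b$ into $W_{b+1}$ with indices modulo $\ell$, and $\sum_{|w|=\ell}A_wV\subseteq V$, so $\bigcap_b W_b$ and $\sum_b W_b$ are $\mathsf{A}$-invariant and by irreducibility equal $\{0\}$ and $\mathbb{R}^d$ respectively. Invertibility of $A_1$ gives $\dim W_b\le\dim W_{b+1}$ around the cycle, so all the $W_b$ share a dimension $k$, $W_b=A_1^bV$, and $A_1^\ell V=V$. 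If the sum $W_0+\dots+W_{\ell-1}$ is direct then $k\ell=d$, an adapted basis puts every $A_i$ in the form \eqref{eq:form}, the product formula \eqref{eq:ftoomch} follows by the matrix multiplication already used for sufficiency, and the remaining assertions --- irreducibility of the $\ell$ blocks, that $(\mathsf{A}_\ell,s)$ has exactly $\ell$ distinct ergodic equilibrium states, and $\mu=\frac1\ell\sum_i\mu_i$ --- follow from Theorem~\ref{th:fk} applied to the now block-\emph{diagonal} $\mathsf{A}_\ell$. The \textbf{main obstacle} is precisely the directness of this sum. The linear-algebraic constraints above do not force it: each $W_b\cap\sum_{c\ne b}W_c$ transforms cyclically under the $A_i$ and so generates an $\mathsf{A}$-invariant subspace, which lets one pass to a smaller configuration unless every such intersection equals the whole of $W_b$, and a rotation of $\mathbb{R}^2$ by $2\pi/3$ shows that genuinely overlapping configurations exist once no equilibrium state is imposed. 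The plan is to rule out overlap using $\mu$: an overlap would force $\|A_w\|$ to be comparable to $\|A_w|_{W_b}\|$ for a single $b$, hence, via the Gibbs property and the cyclic relation among the blocks, would identify $\mu$ with the equilibrium state of a single irreducible tuple and make it $\sigma^\ell$-ergodic, contradicting $\ell\ge2$; invertibility of $A_1$ is what allows one to move between the $W_b$ and compare their contributions to $\|A_w\|$. I expect the delicate points to be this transfer of the Gibbs estimates across the two competing alignments, and excluding reducibility of the individual diagonal blocks of $\mathsf{A}_\ell$ --- which, a reducible block again producing an overlapping cyclic configuration, should fall to a variant of the same argument.
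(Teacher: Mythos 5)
Your sufficiency argument and your opening reduction are fine (and your use of the standard cyclic ergodic decomposition of a $\sigma$-ergodic measure under $\sigma^\ell$ to get exactly $\ell$ cyclically permuted ergodic components, each an equilibrium state of $(\mathsf{A}_\ell,s)$, is a legitimate shortcut past the counting of the paper's auxiliary integer $p$). But the core of the necessity direction is missing, as you yourself flag: everything in the statement beyond the failure of $\sigma^\ell$-ergodicity --- the identity $k\ell=d$, the cyclic form \eqref{eq:form}, the block structure \eqref{eq:ftoomch}, the irreducibility of the $\ell$ diagonal blocks and the exact count of $\ell$ ergodic equilibrium states --- rests on the directness of $W_0+\cdots+W_{\ell-1}$, which you do not prove. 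The plan you sketch for it is not an argument: an overlap among the $W_b$ does not by itself make $\|A_w\|$ comparable to the norm of the restriction of $A_w$ to a single $W_b$, the ``transfer of Gibbs estimates across the two competing alignments'' is precisely the step that would have to be invented, and you would in addition have to exclude the possibility that the cyclic family $W_0,\ldots,W_{\ell-1}$ has period a proper divisor of $\ell$, as well as prove irreducibility of each block, which you also defer.

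The paper closes exactly this gap with no Gibbs comparison at all, by changing two things in your construction. First, the seed subspace is not an arbitrary $\mathsf{A}_\ell$-invariant subspace: one takes $U_1$ to be an $\mathsf{A}_n$-invariant subspace of \emph{minimal} dimension $k$ (here $n$ is your $\ell$, the least exponent of non-ergodicity), sets $U_{i+1}=\mathrm{span}\,\mathsf{A}U_i$ and $V_i=\mathrm{span}\bigcup_{j\leq i}U_j$, and defines $\ell'$ as the least index with $\mathsf{A}V_{\ell'}\subseteq V_{\ell'}$, so that $V_{\ell'}=\mathbb{R}^d$ by irreducibility (invertibility of one matrix enters only to show $\dim U_i=k$ for all $i$, much as in your dimension argument). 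Minimality of $k$ then yields directness for free: $U_{i+1}\cap V_i$ is $\mathsf{A}_n$-invariant, hence is either $\{0\}$ or all of $U_{i+1}$, and the latter would give $V_{i+1}=V_i$ and $\mathsf{A}V_i\subseteq V_i$, contradicting the minimality of $\ell'$; the same minimality of $k$ gives irreducibility of each $k\times k$ block. Second, the geometric cycle length $\ell'$ is not a priori your $\ell$ --- your rotation-by-$2\pi/3$ example illustrates exactly this discrepancy --- and the identification is made by counting rather than by norm estimates: block-diagonalising $\mathsf{A}_n$ over $\bigoplus_i U_i$ bounds the number $m$ of ergodic $\sigma^n$-invariant equilibrium states by $\ell'$ via Theorem \ref{th:fk}, the trivial pairwise intersections give $n\geq\ell'$, and the cyclic orbit of one ergodic component under $\sigma_*$ gives $m\geq n$, forcing $m=n=\ell'$, after which \eqref{eq:form}, \eqref{eq:ftoomch} and $\mu=\frac{1}{\ell}\sum_i\mu_i$ all follow. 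Your decomposition already supplies the input $m\geq n$, so the paper's minimal-dimension construction and this chain of inequalities can be grafted onto your set-up; but as written, with an arbitrary invariant $V$ and the Gibbs plan unexecuted, the proof is incomplete.
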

It was shown in \cite{Mo17} that there exist examples of irreducible $N$-tuples $\mathsf{A}$ with a non-mixing equilibrium state for some $s>0$, such as the pair $\mathsf{A}=(A_1,A_2)\in M_2(\mathbb{R})^2$ defined by
\[A_1:=\left(\begin{array}{cc}0&2\\ 1&0\end{array} \right),\qquad A_2:=\left(\begin{array}{cc}0&1\\ 2&0\end{array} \right).\]
This pair lies in the class described in Theorem \ref{th:one} with $k=1$, $\ell=2$ and $X=\mathrm{Id}$. Theorem \ref{th:one} indicates that the mechanism by which mixing fails in this example is in fact the only possible mechanism for the failure of mixing in the invertible case.

In the presence of an invertible matrix Theorem \ref{th:one} allows us to positively answer two questions posed by the author in \cite{Mo17}:
\begin{corollary}\label{co:quod-erat-pedicandus}
Let $\mathsf{A}=(A_1,\ldots,A_N)\in M_d(\mathbb{R})^N$ be irreducible, let $s>0$, and suppose that at least one of the matrices $A_i$ is invertible. Let $\mu$ be the unique equilibrium state of $(\mathsf{A},s)$. Then the following are equivalent:
\begin{enumerate}[(i)]
\item
The measure $\mu$ is mixing with respect to $\sigma$.
\item
The measure $\mu$ is ergodic with respect to $\sigma^d$.
\item
For every $t>0$ the unique equilibrium state of $(\mathsf{A},t)$ is mixing with respect to $\sigma$.
\end{enumerate}
\end{corollary}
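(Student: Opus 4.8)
The plan is to deduce Corollary~\ref{co:quod-erat-pedicandus} from Theorem~\ref{th:one} together with one additional pressure identity. Two of the implications are immediate: (iii) gives (i) on taking $t=s$, and (i) gives (ii) because a $\sigma$-mixing measure is automatically $\sigma^d$-mixing, hence $\sigma^d$-ergodic, since $\mu((\sigma^d)^{-n}E\cap F)=\mu(\sigma^{-dn}E\cap F)\to\mu(E)\mu(F)$ as $n\to\infty$. For (ii)$\Rightarrow$(i) I argue contrapositively: if $\mu$ is not mixing then Theorem~\ref{th:one} yields integers $\ell>1$ and $k\ge 1$ with $k\ell=d$ and writes $\mu=\frac1\ell\sum_{i=1}^\ell\mu_i$ with the $\mu_i$ pairwise distinct and $\sigma^\ell$-invariant; since $\ell$ divides $d$, each $\mu_i$ is also $\sigma^d$-invariant, so $\mu$ is a nontrivial convex combination of distinct $\sigma^d$-invariant measures and hence is not $\sigma^d$-ergodic.

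The substance is (i)$\Rightarrow$(iii), which I would obtain by showing that, when some $A_i$ is invertible, the property ``$(\mathsf{A},s)$ has a non-mixing equilibrium state'' does not depend on $s$. In the list of conditions furnished by Theorem~\ref{th:one}, the integers $\ell,k$, the matrix $X$, the blocks $A_i^{(j)}$, and the irreducibility of the $\ell$ associated matrix sets depend on $\mathsf{A}$ alone. Let $\mathsf{B}^{(j)}$ denote the $N^\ell$-tuple of $k\times k$ matrices given by the $j$-th diagonal block in \eqref{eq:ftoomch}, so that conjugation by $X$ makes $\mathsf{A}_\ell$ block diagonal with diagonal blocks $\mathsf{B}^{(1)},\dots,\mathsf{B}^{(\ell)}$; assuming (as Theorem~\ref{th:one} guarantees in the non-mixing case) that these are irreducible, Theorem~\ref{th:fk} tells us that the ergodic $\sigma^\ell$-invariant equilibrium states of $(\mathsf{A}_\ell,s)$ are exactly the equilibrium states of those $\mathsf{B}^{(j)}$ for which $P(\mathsf{B}^{(j)},s)=P(\mathsf{A}_\ell,s)=\max_{j'}P(\mathsf{B}^{(j')},s)$, at least one such $j$ always existing. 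Everything is thus reduced to the following lemma, which I would isolate and prove: \emph{if $\mathsf{A}$ has the form \eqref{eq:form} and at least one $A_{i_0}$ is invertible, then $P(\mathsf{B}^{(1)},s)=\dots=P(\mathsf{B}^{(\ell)},s)$ for every $s>0$.} Granting it, $(\mathsf{A}_\ell,s)$ has exactly $\ell$ ergodic $\sigma^\ell$-invariant equilibrium states for every $s>0$, so — just as in the proof of Theorem~\ref{th:one}, using \eqref{eq:form} to see that $\sigma$ cyclically permutes these $\ell$ ergodic components and that the equilibrium state of $(\mathsf{A},s)$ is $\sigma$-invariant — that equilibrium state is $\frac1\ell$ times the sum of those $\ell$ measures and hence is not $\sigma^\ell$-ergodic, so not mixing. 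Therefore the equilibrium state of $(\mathsf{A},s)$ is non-mixing for every $s>0$ or for none, giving (i)$\Leftrightarrow$(iii); incidentally this shows that, when some $A_i$ is invertible, the characterisation in Theorem~\ref{th:one} reduces to \eqref{eq:form}, \eqref{eq:ftoomch} and the irreducibility of the $\ell$ matrix sets.

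For the lemma I would compare consecutive blocks. Writing a word of length $n\ell$ as $W=a_1\cdots a_{n\ell}$, a length-$n$ product of $\mathsf{B}^{(j)}$ is precisely the $j$-th diagonal block $\widetilde W_j$ of $X^{-1}(A_{a_{n\ell}}\cdots A_{a_1})X$; following the cyclic action of \eqref{eq:form} on the block subspaces one finds $\widetilde W_j=PM$ with $M=A_{a_1}^{(j)}$ and $P=A_{a_{n\ell}}^{(j-1)}\cdots A_{a_2}^{(j+1)}$ (superscripts read modulo $\ell$), while the cyclically shifted word $W'=a_2\cdots a_{n\ell}a_1$ satisfies $\widetilde{W'}_{j+1}=MP$. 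Invertibility of $A_{i_0}$ forces each block $A_{i_0}^{(m)}$ to be invertible; hence on the set of words with $a_1=i_0$ the factor $M=A_{i_0}^{(j)}$ is invertible, and $\|\widetilde W_j\|=\|PM\|=\|M^{-1}(MP)M\|\le\kappa\,\|MP\|=\kappa\,\|\widetilde{W'}_{j+1}\|$ with $\kappa=\|A_{i_0}^{(j)}\|\,\|(A_{i_0}^{(j)})^{-1}\|$ independent of $W$ and $n$. As $W$ ranges over words of length $n\ell$ beginning with $i_0$, the shift $W'$ ranges over those ending with $i_0$; so, writing $Z_n^{(j)}$ for the length-$n$ partition function of $\mathsf{B}^{(j)}$ at parameter $s$, we obtain $\sum_{a_1=i_0}\|\widetilde W_j\|^s\le\kappa^s\sum_{(W')_{n\ell}=i_0}\|\widetilde{W'}_{j+1}\|^s\le\kappa^s\,Z_n^{(j+1)}$. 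For the matching lower bound, prepending the constant block $i_0\cdots i_0$ ($\ell$ letters) to an arbitrary word $V$ of length $(n-1)\ell$ multiplies $\widetilde V_j$ on the right by the invertible matrix $B^{(j)}_{i_0\cdots i_0}$, so $\sum_{a_1=i_0}\|\widetilde W_j\|^s\ge\|(B^{(j)}_{i_0\cdots i_0})^{-1}\|^{-s}\,Z_{n-1}^{(j)}$. Combining the two bounds gives $Z_{n-1}^{(j)}\le C\,Z_n^{(j+1)}$ with $C$ independent of $n$, hence $P(\mathsf{B}^{(j)},s)\le P(\mathsf{B}^{(j+1)},s)$; running this inequality around the cycle $1\to 2\to\cdots\to\ell\to 1$ forces all the $P(\mathsf{B}^{(j)},s)$ to coincide.

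I expect the lemma to be the main obstacle, and it is the only place where the hypothesis that some $A_i$ is invertible is genuinely used: with every matrix singular the factor $M$ can itself be singular, so $\|\widetilde W_j\|$ and $\|\widetilde{W'}_{j+1}\|$ need no longer be comparable and the estimate breaks down — consistent with the remark preceding the corollary that the behaviour is genuinely different in the purely non-invertible case. The one further point that needs care is the passage, once $(\mathsf{A}_\ell,s)$ is known to have $\ell$ ergodic equilibrium states, to the conclusion that $\sigma$ cyclically permutes them and hence that the $\sigma$-invariant equilibrium state of $(\mathsf{A},s)$ is their average; this goes exactly as in the proof of Theorem~\ref{th:one}, comparing the Gibbs bounds of Theorem~\ref{th:fk} for $\sigma_*\mu_i$ and $\mu_{i+1}$ on cylinders and invoking ergodicity.
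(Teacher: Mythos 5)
Your treatment of (i)$\Leftrightarrow$(ii) and of (iii)$\Rightarrow$(i) matches the paper, and your pressure lemma is a legitimate alternative to the paper's route: the paper instead proves (Lemma \ref{le:blox}, via Lemma \ref{le:other}) that each diagonal block of \eqref{eq:ftoomch} has pressure equal to $P(\mathsf{A}_\ell,t)$ using only irreducibility, with no invertibility needed, whereas your cyclic-shift comparison $\|PM\|\le\kappa\|MP\|$ genuinely uses the invertible matrix; both give the same conclusion that every block contributes an equilibrium state of $(\mathsf{A}_\ell,t)$ for every $t>0$.

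However, there is a genuine gap at the decisive step. From equality of the block pressures you conclude that ``$(\mathsf{A}_\ell,t)$ has exactly $\ell$ ergodic $\sigma^\ell$-invariant equilibrium states for every $t>0$.'' What the pressure identity plus Theorem \ref{th:fk} actually gives is that each of the $\ell$ irreducible blocks contributes its unique equilibrium state, i.e.\ \emph{at most} $\ell$ ergodic equilibrium states: nothing so far rules out that two different blocks have the \emph{same} equilibrium state at the parameter $t$, even though Theorem \ref{th:one} guarantees the $\ell$ measures are pairwise distinct at the original parameter $s$. This distinctness is exactly what your final argument needs: if at some $t$ all the block measures coincided with a single measure $\nu_0$, then the equilibrium state of $(\mathsf{A},t)$ would equal $\nu_0$ and would be $\sigma^\ell$-ergodic, and no failure of mixing could be deduced; your appeal to ``$\sigma$ cyclically permutes these $\ell$ ergodic components'' presupposes there are $\ell$ distinct components to permute. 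The paper closes precisely this hole with Lemma \ref{le:anne} (the equivalence (i)$\Leftrightarrow$(iii) of Theorem 9 of \cite{Mo17}): two irreducible tuples with equal pressure functions have equal equilibrium states for one $s>0$ if and only if they do for all $s>0$, so distinctness at $s$ propagates to every $t$. Your proposal contains no substitute for this ingredient, and the same omission undermines your incidental claim that in the invertible case the characterisation of Theorem \ref{th:one} reduces to \eqref{eq:form}, \eqref{eq:ftoomch} and irreducibility of the $\ell$ block sets. The rest of the argument (choosing some block measure $\nu_j\neq\nu$, averaging its $\sigma$-orbit, and concluding non-ergodicity with respect to $\sigma^\ell$) does go through once distinctness is restored, exactly as in the paper.
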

The hypothesis that one of the matrices is invertible plays only a small role in the proof of Theorem \ref{th:one} and plays no role at all in the deduction of Corollary \ref{co:quod-erat-pedicandus} from Theorem \ref{th:one}. Nonetheless, the following proposition demonstrates that  both Theorem \ref{th:one} and Corollary \ref{co:quod-erat-pedicandus} fail if this invertibility hypothesis is removed.
\begin{proposition}\label{pr:dont-swear-in-front-of-the-copy-editor}
Define a pair of matrices $\mathsf{A}=(A_1,A_2)\in M_3(\mathbb{R})^2$ by
\[A_1:=\left(\begin{array}{ccc}0&1&2\\2&0&0\\1&0&0\end{array}\right),\qquad A_2=\left(\begin{array}{ccc}0&2&1\\1&0&0\\2&0&0\end{array}\right).\]
Then $\mathsf{A}$ is irreducible, and for every $s>0$ the unique equilibrium state of $(\mathsf{A},s)$ is ergodic with respect to $\sigma^3$ but not with respect to $\sigma^2$. For every $s>0$ the matrices $\mathsf{A}_2$ have exactly two ergodic equilibrium states, the average of which is the equilibrium state of $(\mathsf{A},s)$.
\end{proposition}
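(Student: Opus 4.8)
The plan is to exploit the block structure of $A_1,A_2$. Writing $\mathbb{R}^3=\mathbb{R}\oplus\mathbb{R}^2$ one has $A_i=\left(\begin{smallmatrix}0&B_i\\ C_i&0\end{smallmatrix}\right)$ with $B_1=(1,2)$, $B_2=(2,1)$, $C_1=(2,1)^{T}$, $C_2=(1,2)^{T}$; thus $\mathsf{A}$ realises the $\ell=2$ case of the pattern \eqref{eq:form}, but with blocks of unequal sizes $1$ and $2$, so Theorem~\ref{th:one} does not apply. Hence $A_iA_j=\left(\begin{smallmatrix}B_iC_j&0\\0&C_iB_j\end{smallmatrix}\right)$, so (with $X=\mathrm{Id}$) $\mathsf{A}_2$ is a direct sum of a tuple $\mathsf{A}_2^{(1)}$ of $1\times1$ matrices whose entries $B_iC_j$ equal $4$ if $i=j$ and $5$ otherwise, and a tuple $\mathsf{A}_2^{(2)}$ of rank-one $2\times2$ matrices $C_iB_j$; moreover every odd product is anti-block-diagonal, so in particular $A_i^{3}=4A_i$ for $i=1,2$.

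First I would verify that $\mathsf{A}$ is irreducible: a common eigenvector $v=(v_1,v_2,v_3)^{T}$ must have $v_1\neq0$ (else the eigenvalue equations force $v=0$), and then $A_1v=\lambda_1 v$ gives $v_2=2v_3$ while $A_2v=\lambda_2 v$ gives $v_3=2v_2$, so $v=0$; and a common invariant plane would have a $1$-dimensional orthogonal complement invariant under $A_1^{T}=A_2$ and $A_2^{T}=A_1$, which is impossible by what we have just shown. Since $\mathsf{A}_3$ contains $A_1^{3}=4A_1$ and $A_2^{3}=4A_2$, every $\mathsf{A}_3$-invariant subspace is $\mathsf{A}$-invariant, so $\mathsf{A}_3$ is also irreducible; by Theorem~\ref{th:fk} the pair $(\mathsf{A}_3,s)$ has a unique, $\sigma^3$-ergodic equilibrium state, and since $\mu$ is (via the usual identification) an equilibrium state of $(\mathsf{A}_3,s)$ it equals that measure. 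Hence $\mu$ is ergodic with respect to $\sigma^3$.

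For $\sigma^2$ I would check that $\mathsf{A}_2^{(2)}$ is irreducible — the matrices $C_iB_j$ have images $\mathbb{R}C_1,\mathbb{R}C_2$ and kernels $\ker B_1,\ker B_2$, four distinct lines, and a short case-check then excludes any invariant line — and compute pressures. The sum defining $P(\mathsf{A}_2^{(1)},s)$ factorises over the $n$ coordinates, giving $P(\mathsf{A}_2^{(1)},s)=\log(2\cdot4^s+2\cdot5^s)$; and since each $C_iB_j$ has rank one, the norm of a length-$n$ product of them equals $5$ times a product of $n-1$ factors from $\{4,5\}$ determined by consecutive index pairs, and summing over all configurations (two indices then decouple freely) gives $P(\mathsf{A}_2^{(2)},s)=\log(2\cdot4^s+2\cdot5^s)$ as well. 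Thus both diagonal blocks attain the pressure $P(\mathsf{A}_2,s)=2P(\mathsf{A},s)$, so by Theorem~\ref{th:fk} the ergodic equilibrium states of $(\mathsf{A}_2,s)$ are exactly the equilibrium states $\nu_1,\nu_2$ of $\mathsf{A}_2^{(1)},\mathsf{A}_2^{(2)}$, regarded as $\sigma^2$-invariant measures on $\Sigma_2$.

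The last step is to make $\nu_1,\nu_2$ explicit. The measure $\nu_1$ is the Bernoulli measure giving the length-two word $ij$ mass proportional to $(B_jC_i)^s$. The rank-one structure forces $\nu_2$ to be a Markov measure: with $Z=2\cdot4^s+2\cdot5^s$, the measure with cylinder masses $\tfrac14Z^{1-n}$ times the appropriate product of $n-1$ transition weights is $\sigma$-invariant, satisfies the Gibbs bounds of Theorem~\ref{th:fk} with a uniform constant, and attains the pressure, so it is $\nu_2$; in particular $\nu_2$ gives every length-two word mass $\tfrac14$. Comparing the two formulas then shows $\sigma_*\nu_1=\nu_2$, and since $\nu_1$ gives $11$ and $12$ the unequal masses $4^s/Z$ and $5^s/Z$ we get $\nu_1\neq\nu_2$ for all $s>0$. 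Therefore $\tfrac12(\nu_1+\nu_2)$ is $\sigma$-invariant and, being a convex combination of equilibrium states of $(\mathsf{A}_2,s)$, is one itself, hence equals $\mu$ by the uniqueness in Theorem~\ref{th:fk}; as $\mu$ is then a nontrivial average of the two distinct $\sigma^2$-invariant measures $\nu_1,\nu_2$ it is not $\sigma^2$-ergodic, and $\mathsf{A}_2$ has exactly the two ergodic equilibrium states $\nu_1,\nu_2$ whose average is $\mu$. The step I expect to be most delicate is pinning down $\nu_2$ explicitly and tracking the index shifts needed to verify $\sigma_*\nu_1=\nu_2$; the rest is finite linear algebra and routine pressure estimates.
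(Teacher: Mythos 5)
Your argument is correct and reaches all of the stated conclusions, but its core --- proving that the two block equilibrium states are distinct and assembling $\mu$ from them --- takes a genuinely different route from the paper's. The preliminary steps essentially coincide: irreducibility of $\mathsf{A}$ via $A_1=A_2^{T}$ (the paper uses the commutator $A_1A_2-A_2A_1$ where you analyse common eigenvectors directly), $\sigma^3$-ergodicity via $A_i^3=4A_i$ (note this identity follows from Cayley--Hamilton or direct computation, not merely from odd products being anti-block-diagonal), the block-diagonal form of $\mathsf{A}_2$, and irreducibility of the tuple of $2\times 2$ blocks. Thereafter you diverge: where the paper invokes Lemma~\ref{le:blox} to know only that the two block pressures equal $P(\mathsf{A}_2,s)$, you compute all of them explicitly as $\log(2\cdot 4^s+2\cdot 5^s)$ using the rank-one structure; and where the paper proves the two block equilibrium states distinct indirectly, by applying the Gibbs inequality of Theorem~\ref{th:fk} along the periodic word $(1,2,2,1)^{\infty}$ and comparing the spectral radii $25$ and $16$, you identify both measures explicitly --- $\nu_1$ Bernoulli on $2$-blocks with weights proportional to $4^s,5^s$, and $\nu_2$ a Markov measure giving every $2$-block mass $\tfrac14$ --- which yields distinctness at a glance together with the extra identity $\sigma_*\nu_1=\nu_2$, so that $\mu=\tfrac12(\nu_1+\nu_2)$ drops out directly rather than via the paper's argument that whichever of the two block measures differs from $\mu$ cannot be $\sigma$-invariant. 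Your route buys explicit formulas valid uniformly in $s$; the paper's route avoids identifying any measure, using only the Gibbs bounds. The only step you should expand is ``satisfies the Gibbs bounds \dots\ so it is $\nu_2$'': either verify that your Markov measure attains $h+s\Lambda=P$ and invoke uniqueness from Theorem~\ref{th:fk}, or note that a $\sigma^2$-invariant measure whose cylinder masses are uniformly comparable (your constant is $Z/(4\cdot 5^s)$) to those of the ergodic measure $\nu_2$ is equivalent to $\nu_2$ and therefore equal to it; both are routine, so this is a matter of writing rather than a gap. Two small points of phrasing: ``$\sigma$-invariant'' in that sentence should mean invariance under the $2$-block shift, i.e.\ $\sigma^2$-invariance on $\Sigma_2$, and identifying the $\sigma$-invariant average $\tfrac12(\nu_1+\nu_2)$ with $\mu$ uses, as in the paper's introduction, the fact that a $\sigma$-invariant equilibrium state of $(\mathsf{A}_2,s)$ is an equilibrium state of $(\mathsf{A},s)$.
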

At the time of writing we are not able to offer any further conjectures on the structure or behaviour of non-invertible irreducible matrix tuples which admit a non-mixing equilibrium state. 

\section{Proof of Theorem \ref{th:one}}

Let $\mathsf{A}=(A_1,\ldots,A_N)$ be as in Theorem \ref{th:one} and let $s>0$.  Let $\mu$ be the unique $\sigma$-invariant equilibrium state of $(\mathsf{A},s)$, which is ergodic with respect to $\sigma$. If $\mu$ can be expressed as a proper linear combination $\frac{1}{\ell}\sum_{i=1}^\ell\mu_i$ of distinct $\sigma^\ell$-invariant measures $\mu_i$ as in the statement of Theorem \ref{th:one} then it is not totally ergodic and is therefore not mixing. The difficult direction of the proof is thus to show that if $\mu$ is not mixing then the various properties described in the statement of the theorem hold true.
 
Suppose then that $\mu$ is not mixing. As noted in the introduction, by Theorem 5(ii) of \cite{Mo17} it is not totally ergodic. Let $n$ be the smallest integer such that $\mu$ is not ergodic with respect to $\sigma^n$ and note that obviously $n>1$. We note that the equilibrium state $\mu$ of $(\mathsf{A},s)$ is an equilibrium state of $(\mathsf{A}_n,s)$ which is invariant with respect to $\sigma^n$ but is not ergodic with respect to $\sigma^n$. It follows from Theorem \ref{th:fk} that $\mathsf{A}_n$ cannot be irreducible. Let $\mu_1,\ldots,\mu_m$ be a complete list of the  ergodic $\sigma^n$-invariant equilibrium states of $(\mathsf{A}_n,s)$, where we note by Theorem \ref{th:fk} that $m$ is finite.

We claim that in fact $m\geq n$. Let $p \geq 1$ be the smallest integer such that one of the measures $\mu_1,\ldots,\mu_m$ is $\sigma^p$-invariant, and pick $j$ such that $\sigma_*^p\mu_j=\mu_j$. Note that $\nu:=\frac{1}{p}\sum_{i=0}^{p-1}\sigma_*^i\mu_j$ is a $\sigma$-invariant equilibrium state of $(\mathsf{A},s)$ and therefore equals $\mu$. We cannot have $p=1$ since then $\mu$, which is not ergodic with respect to $\sigma^n$, would be equal to $\mu_j$, which is ergodic with respect to $\sigma^n$. It follows in particular that $\sigma_*\mu_j \neq \mu_j$, so $\nu$ is a proper linear combination of at least two distinct $\sigma^p$-invariant measures and as such cannot be ergodic with respect to $\sigma^p$. Since $\nu=\mu$ we see that $\mu$ is not ergodic with respect to $\sigma^p$ and therefore $p \geq n$ by the definition of $n$. Each of the $p$ measures $\sigma_*^i\mu_j$ is trivially $\sigma^n$-invariant and each must be an equilibrium state of $(\mathsf{A}_n,s)$ because otherwise their average $\mu$ could not be an equilibrium state of $(\mathsf{A}_n,s)$. Furthermore each is ergodic with respect to $\sigma^n$, since if we could write $\sigma_*^i\mu_j=\alpha \nu_1+(1-\alpha)\nu_2$ for two distinct $\sigma^n$-invariant measures $\nu_1$ and $\nu_2$ and some real number $\alpha \in (0,1)$ then $\mu_j$ would have a nontrivial decomposition $\mu_j=\alpha\sigma_*^{n-i}\nu_1 + (1-\alpha)\sigma_*^{n-i}\nu_2$ into distinct $\sigma^n$-invariant measures, contradicting the ergodicity of $\mu_j$ with respect to $\sigma^n$. Thus each $\sigma_*^i\mu_j$ is an ergodic $\sigma^n$-invariant equilibrium state of $(\mathsf{A}_n,s)$ and therefore must be equal to one of the measures $\mu_i$. It follows that if any two of the measures $\sigma_*^i\mu_j$ for $i=0,\ldots,p-1$ are not distinct then the minimality of $p$ is contradicted. Therefore there are at least $p$ distinct ergodic $\sigma^n$-invariant equilibrium states, which is to say $m \geq p$. We have shown that $m \geq p \geq n$.

Let  $k$ be the dimension of the lowest-dimensional nonzero linear subspace of $\mathbb{R}^d$ which is preserved by $\mathsf{A}_n$. Since $\mathsf{A}_n$ is reducible, $k<d$. Let $U\subseteq \mathbb{R}^d$ be any $k$-dimensional subspace which is preserved by $\mathsf{A}_n$ and define $U_1:=U$. Define a sequence of subspaces $U_i$ of $\mathbb{R}^d$ inductively by
\[U_{i+1}:=\mathrm{span} \,\mathsf{A}U_i:=\mathrm{span} \{A_ju \colon u \in U_i \text{ and }1 \leq j \leq N\}.\]
We claim that $\dim U_i=k$ for every integer $i\geq 1$. By hypothesis there exists $t \in \{1,\ldots,N\}$ such that $A_t$ is invertible. The case $i=1$ is obvious, so given $i > 1$ choose $r \geq 1$ such that $rn> i$, then $\mathsf{A}_{rn-i+1}\mathsf{A}_{i-1}U_1=\mathsf{A}_{rn}U_1\subseteq U_1$ since $\mathsf{A}_nU_1\subseteq U_1$ by definition of $U_1$. In particular $A_t^{rn-i+1}\mathsf{A}_{i-1}U_1\subseteq U_1$ and therefore $\mathsf{A}_{i-1}U_1\subseteq A^{-(rn-i+1)}_tU_1$. Since $A^{-(rn-i+1)}_tU_1$ is a $k$-dimensional subspace of $\mathbb{R}^d$ it follows that $\mathsf{A}_{i-1}U_1$ is contained in a $k$-dimensional subspace of $\mathbb{R}^d$ and therefore $U_i=\mathrm{span}(\mathsf{A}_{i-1}U_1)$ has dimension at most $k$. On the other hand we equally have $A_t^{i-1}U_1\subseteq \mathsf{A}_{i-1}U_1\subseteq U_i$ and therefore $U_i$ has dimension at least $k$. This proves the claim. 

Define now
\[V_i := \mathrm{span} \bigcup_{j=1}^i U_j\]
for each $i \geq 1$. Clearly  $\mathsf{A}U_i \subseteq U_{i+1}$ and therefore $\mathsf{A}V_i\subseteq V_{i+1}$  for each $i$. Clearly also $V_i\subseteq V_{i+1}$ for each $i$. It is impossible for $V_i$ to be a proper subspace of $V_{i+1}$ for every $i$ since then the dimension of $V_i$ would grow without limit as $i \to \infty$, which is impossible since $V_i \subseteq \mathbb{R}^d$. Therefore there must exist an integer $i$ such that $V_{i+1}=V_i$ and in particular $\mathsf{A}V_i \subseteq V_i$. Let $\ell$ be the smallest integer such that $\mathsf{A}V_\ell\subseteq V_\ell$. Since $\mathsf{A}$ is irreducible and $V_\ell$ contains the subspace $U_1$ which has nonzero dimension, this implies that $V_\ell=\mathbb{R}^d$.

We claim next that $U_{i+1}\cap V_i= \{0\}$ for every $i=1,\ldots,\ell-1$. Fix any  $i \in \{1,\ldots,\ell-1\}$. We observe that by construction $\mathsf{A}_nU_j \subseteq U_j$ for each $j$ and therefore in particular $U_{i+1}\cap V_i$ is an $\mathsf{A}_n$-invariant subspace. If $0<\dim (U_{i+1}\cap V_i)<k$ then the minimality of $k$ is contradicted. Clearly $\dim (U_{i+1}\cap V_i)\leq  \dim U_{i+1}=k$, so if $\dim (U_{i+1}\cap V_i)\geq k$ then necessarily $U_{i+1}\cap V_i = U_{i+1}$ and therefore $U_{i+1}\subseteq V_i$. It follows that $V_{i+1}=V_i$ and therefore $\mathsf{A}V_i \subseteq V_i$. In this case we must have $V_i=\mathbb{R}^d$ and $i<\ell$, but this contradicts the minimality of $\ell$. We conclude that necessarily $\dim (U_{i+1}\cap V_i)= 0$ for each $i=1,\ldots,\ell-1$, and this proves the claim. It follows by induction that $V_i=\bigoplus_{j=1}^i U_j$ for $i=1,\ldots,\ell$ and therefore $\mathbb{R}^d=V_\ell=\bigoplus_{i=1}^\ell U_i$. In particular $k\ell=d$. 

For $i=1,\ldots,\ell$ the $k$-dimensional subspace $U_i$ is clearly preserved by $\mathsf{A}_n$, and the minimality of $k$ implies that $\mathsf{A}_n$ does not preserve a proper subspace of any $U_i$. Since $\mathbb{R}^d=\bigoplus_{i=1}^\ell U_i$ it follows that $\mathsf{A}_n$ can be simultaneously block diagonalised with $\ell$ irreducible diagonal blocks each of size $k \times k$. By Theorem \ref{th:fk} it follows from this that there are at most $\ell$ distinct ergodic $\sigma^n$-invariant equilibrium states for $(\mathsf{A}_n,s)$, which is to say $\ell \geq m$. On the other hand $\mathsf{A}_nU_1\subseteq U_1$ but the $k$-dimensional subspaces spanned by each of $U_1, \mathsf{A}U_1,\ldots,\mathsf{A}_{\ell-1}U_1$ have trivial pairwise intersection and are therefore all distinct, so necessarily $n \geq \ell$. Combining the inequalities so far obtained yields $m\geq p \geq n \geq \ell \geq m$ and therefore $m,p,n$ and $\ell$ are all equal.

Let us now take stock of what has been proved. Since $n=m=\ell$ there are exactly $\ell$ distinct ergodic $\sigma^\ell$-invariant measures $\mu_1,\ldots,\mu_\ell$ which are equilibrium states for $(\mathsf{A}_\ell,s)$. The measure $\mu$ can be written as $\frac{1}{p}\sum_{i=0}^{p-1}\sigma_*^i\mu_j$ for a certain $\mu_j$, and the measures $\mu_j,\sigma_*\mu_j,\ldots,\sigma_*^{p-1}\mu_j$ are distinct ergodic  $\sigma^\ell$-invariant equilibrium states for $(\mathsf{A}_\ell,s)$; but since $p=\ell=m$, the $\ell$ distinct measures $\sigma_*^i\mu_j$ must be precisely the $\ell$ distinct measures $\mu_1,\ldots,\mu_\ell$, and therefore in fact $\mu=\frac{1}{\ell}\sum_{i=1}^\ell \mu_i$. We have $\mathbb{R}^d=\bigoplus_{i=1}^\ell U_i$, $\mathrm{span}(\mathsf{A}U_i) = U_{i+1}$ for $i=1,\ldots,\ell-1$, and $\mathsf{A}_nU_1\subseteq U_1$, but since $\ell=n$ it follows that $\mathrm{span}(\mathsf{A} U_\ell)=\mathrm{span}(\mathsf{A}_\ell U_1)=U_1$ so $\mathsf{A}U_\ell \subseteq U_1$. Thus the action of $\mathsf{A}$ cyclically permutes the subspaces $U_1,\ldots,U_\ell$ and therefore $\mathsf{A}$ admits the form \eqref{eq:form} after the application of a suitable change-of-basis matrix $X$. By direct calculation the matrices in $\mathsf{A}_n$ therefore admit the representation \eqref{eq:ftoomch}, and as noted previously each $k\times k$ block is irreducible. By Theorem \ref{th:fk} it follows that there is at most one equilibrium state for $(\mathsf{A}_\ell,s)$ associated to each diagonal block in the expression \eqref{eq:ftoomch}, but we have seen that there are exactly $\ell$ such equilibrium states and so each block contributes exactly one equilibrium state as claimed. The proof of the theorem is complete.

\section{Auxiliary lemmas}
Before proceeding with the proofs of Proposition \ref{pr:dont-swear-in-front-of-the-copy-editor} and Corollary \ref{co:quod-erat-pedicandus} we require two lemmas:
\begin{lemma}\label{le:other}
Let $\mathsf{A}=(A_1,\ldots,A_N)\in M_d(\mathbb{R})^N$ be irreducible and let $s>0$. Then
\[\limsup_{n \to \infty} \frac{1}{kn}\log\left(\sum_{i_1,\ldots,i_{kn}=1}^N \|A_{i_{kn}}\cdots A_{i_1}v\|^s\right) =P(\mathsf{A},s)\]
for every nonzero $v \in \mathbb{R}^d$ and integer $k \geq 1$.
\end{lemma}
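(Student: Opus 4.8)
The plan is to prove the two inequalities separately, establishing in fact that the limit along the subsequence $m=kn$ exists and equals $P(\mathsf{A},s)$. Throughout write $A_w:=A_{w_t}\cdots A_{w_1}$ for a finite word $w=(w_1,\ldots,w_t)\in\{1,\ldots,N\}^t$, with $A_\emptyset:=\mathrm{Id}$, and set $S_m(v):=\sum_{|w|=m}\|A_wv\|^s$ and $T_m:=\sum_{|w|=m}\|A_w\|^s$, so that $\frac{1}{m}\log T_m\to P(\mathsf{A},s)$ by \eqref{eq:pressure}. The upper bound is immediate: $\|A_wv\|\le\|A_w\|\,\|v\|$ gives $S_m(v)\le\|v\|^sT_m$ and hence $\limsup_{m\to\infty}\frac{1}{m}\log S_m(v)\le P(\mathsf{A},s)$, so in particular $\limsup_{n\to\infty}\frac{1}{kn}\log S_{kn}(v)\le P(\mathsf{A},s)$ for every $k\ge1$.

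For the matching lower bound I would use irreducibility twice. First, $\mathrm{span}\{A_wv\colon w\text{ a finite word}\}$ is an $\mathsf{A}$-invariant subspace containing the nonzero vector $v=A_\emptyset v$, hence equals $\mathbb{R}^d$; so I may fix finite words $w_1,\ldots,w_d$ for which $A_{w_1}v,\ldots,A_{w_d}v$ form a basis of $\mathbb{R}^d$, put $q:=\max_r|w_r|$, and let $C$ denote the sum of the norms of the dual coordinate functionals. Given a word $\mathbf{i}$ of length $m$, choose a unit vector $y$ with $\|A_{\mathbf{i}}y\|=\|A_{\mathbf{i}}\|$ (the operator norm being attained on the compact unit sphere) and expand $y=\sum_{r=1}^dc_rA_{w_r}v$ with $\sum_r|c_r|\le C$; then $\|A_{\mathbf{i}}\|\le C\max_r\|A_{\mathbf{i}}A_{w_r}v\|$, and since for each fixed $r$ the word realising the product $A_{\mathbf{i}}A_{w_r}$ has $\mathbf{i}$ as its final $m$ symbols, summing $s$-th powers over $|\mathbf{i}|=m$ yields $T_m\le C^s\sum_{r=1}^dS_{m+|w_r|}(v)$. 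Second, $\bigcap_{i=1}^N\ker A_i$ is $\mathsf{A}$-invariant, so by irreducibility it is $\{0\}$ or $\mathbb{R}^d$; the latter forces every $A_i$ to vanish, a case in which the lemma is trivial since both sides are $-\infty$, so I may assume $\bigcap_i\ker A_i=\{0\}$. Then $y\mapsto\sum_{i=1}^N\|A_iy\|^s$ is continuous and strictly positive on the unit sphere, hence bounded below there, which after normalisation gives a constant $c\in(0,1]$ with $\sum_{i=1}^N\|A_iy\|^s\ge c\|y\|^s$ for all $y\in\mathbb{R}^d$; an induction on $t$ then gives $\sum_{|w|=t}\|A_wy\|^s\ge c^t\|y\|^s$ for all $y$ and $t\ge0$, whence $S_{m+t}(v)\ge c^tS_m(v)$ for all $m,t\ge0$.

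I would then combine these for $m=kn-q$ (valid once $kn\ge q$): since $0\le q-|w_r|\le q$, the second estimate gives $S_{kn}(v)\ge c^{q-|w_r|}S_{(kn-q)+|w_r|}(v)\ge c^qS_{(kn-q)+|w_r|}(v)$, so that $T_{kn-q}\le C^sd\,c^{-q}S_{kn}(v)$. Dividing by $kn$, letting $n\to\infty$, and using $\frac{1}{m}\log T_m\to P(\mathsf{A},s)$ together with $\frac{kn-q}{kn}\to1$ gives $\liminf_{n\to\infty}\frac{1}{kn}\log S_{kn}(v)\ge P(\mathsf{A},s)$, which with the upper bound finishes the proof, and in fact yields existence of the limit.

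The only point requiring care is the passage from the pointwise bound $\|A_{\mathbf{i}}\|\le C\max_r\|A_{\mathbf{i}}A_{w_r}v\|$ to the summed inequality $T_m\le C^s\sum_rS_{m+|w_r|}(v)$: one must verify that for each fixed $r$ the map sending $\mathbf{i}$ to the word realising $A_{\mathbf{i}}A_{w_r}$ is injective, so that no two distinct words $\mathbf{i}$ collapse and artificially inflate the right-hand side. This is immediate once the concatenation convention is pinned down, so I do not expect a genuine obstacle; the whole argument is elementary, resting only on \eqref{eq:pressure}, compactness of the unit sphere, and the two consequences of irreducibility isolated above.
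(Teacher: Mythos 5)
Your proof is correct, but it follows a genuinely different route from the paper. The paper first treats $k=1$ by a soft argument: the set $V$ of vectors $v$ for which the limit superior falls strictly below $P(\mathsf{A},s)$ is shown, via the inequality $(|x|+|y|)^s\le 2^s(|x|^s+|y|^s)$, to be a linear subspace which is moreover $\mathsf{A}$-invariant, so irreducibility forces $V=\{0\}$ (the case $V=\mathbb{R}^d$ being excluded by expanding unit vectors in the standard basis); general $k$ is then handled separately by the one-step bound $a_{m+1}(v)\le Ka_m(v)$ with $K=\sum_i\|A_i\|^s$, which shows the limit superior along multiples of $k$ coincides with the limit superior along all $n$. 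You instead argue quantitatively: irreducibility gives you words $w_1,\ldots,w_d$ with $A_{w_1}v,\ldots,A_{w_d}v$ a basis, whence $T_m\le C^s\sum_r S_{m+|w_r|}(v)$, and irreducibility again gives $\bigcap_i\ker A_i=\{0\}$ and hence the reverse-direction comparison $S_{m+t}(v)\ge c^tS_m(v)$; combining the two sandwiches $S_{kn}(v)$ against $T_{kn-q}$ up to constants. All the individual steps check out (including the injectivity of concatenation you flag, the degenerate all-zero case, and the homogeneity/compactness argument for $c>0$), and your argument buys more than the paper's: it proves that the limit exists along all $m$, which the paper only asserts in the remark following the lemma, and it comes close to the two-sided Gibbs-type bound stated there — though your constants $C$, $q$, $c$ depend on $v$ through the choice of the words $w_r$, so you do not get the uniformity in $v$ claimed in that remark. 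The trade-off is that the paper's argument is shorter and avoids constructing any explicit words or constants, at the price of needing the separate subsequence step for $k>1$, which your approach handles automatically.
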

\begin{proof}
Fix $s>0$ throughout the proof. For notational convenience define $a_n(v):=\sum_{i_1,\ldots,i_{n}=1}^N \|A_{i_n}\cdots A_{i_1}v\|^s$ for every $n \geq 1$ and $v \in \mathbb{R}^d$.
Let us first prove the lemma in the case $k=1$. Clearly the limit superior must less than or equal to $P(\mathsf{A},s)$, so to prove the lemma in this case we must show that the set
\[V:=\left\{v \in \mathbb{R}^d \colon \limsup_{n\to\infty} \frac{1}{n}\log a_n(v)<P(\mathsf{A},s)\right\}\]
contains only the zero vector. Using the elementary inequality
\[(|x|+|y|)^s \leq (2\max\{|x|,|y|\})^s\leq 2^s(|x|^s+|y|^s)\]
we have $a_n(u+\lambda v)\leq 2^sa_n(u)+(2|\lambda|)^sa_n(v)$ for all $u,v\in\mathbb{R}^d$, $\lambda \in \mathbb{R}$ and $n\geq 1$ and it follows that $V$ is a vector space. Obviously also $a_n(A_iv)\leq a_{n+1}(v)$ for every $i=1,\ldots,N$,  $n\geq 1$ and $v \in \mathbb{R}^d$ which implies that $V$ is $\mathsf{A}$-invariant. By irreducibility $V$ must therefore equal either $\{0\}$ or $\mathbb{R}^d$. Let us suppose for a contradiction that $V=\mathbb{R}^d$. If $e_1,\ldots,e_d \in \mathbb{R}^d$ denotes the standard basis, then
\begin{align*}P(\mathsf{A},s):=\lim_{n \to \infty} \frac{1}{n}\log \sup_{\|v\|=1} a_n(v) &= \limsup_{n \to \infty} \frac{1}{n}\log \left(\max_{\substack{\alpha_1,\ldots,\alpha_d \in\mathbb{R}\\ \sum_{i=1}^d|\alpha_i|^2=1}} a_n\left(\sum_{i=1}^d\alpha_i e_i\right)\right) \\
&\leq \limsup_{n\to\infty}\frac{1}{n}\log \left(d^s \max_{1 \leq i \leq d} a_n(e_i)\right)<P(\mathsf{A},s)\end{align*}
a contradiction. We conclude that necessarily $V=\{0\}$ and thus the lemma is proved in the case $k=1$.

Let us now show that $\limsup_{n \to \infty} (nk)^{-1}\log a_{nk}(v)$ is independent of $k\geq 1$. Fix nonzero $v \in \mathbb{R}^d$ and let $K:=\sum_{i=1}^N\|A_i\|^s>0$. If $n,k \geq 1$ and  $0\leq \ell<k$ then clearly 
\[a_{(n+1)k}(v)\leq K^{k-\ell}a_{nk+\ell}(v)\leq K^{k}a_{nk}(v),\]
so for fixed $\ell$ and $k$
\begin{align*}
\limsup_{n \to \infty} \frac{1}{nk}\log a_{nk}(v) &=\limsup_{n \to \infty} \frac{1}{nk}\log a_{(n+1)k}(v)\\
& \leq \limsup_{n \to \infty} \frac{1}{nk}\log a_{nk+\ell}(v) \leq \limsup_{n \to \infty} \frac{1}{nk}\log a_{nk}(v)\end{align*}
and therefore
\[\limsup_{n \to \infty} \frac{1}{nk}\log a_{nk}(v) =\limsup_{n \to \infty} \frac{1}{nk}\log a_{nk+\ell}(v)=\limsup_{n \to \infty} \frac{1}{nk+\ell}\log a_{nk+\ell}(v).\]
On the other hand for each fixed $k \geq 1$ we clearly have
\[\limsup_{n \to \infty} \frac{1}{n}\log a_{n}(v)=\max_{0 \leq \ell <k} \limsup_{n \to \infty} \frac{1}{nk+\ell}a_{nk+\ell}(v) = \limsup_{n \to \infty}\frac{1}{nk}\log a_{nk}(v)\]
and therefore the limit superior is independent of $k$ for each fixed $v$ as claimed.
\end{proof}
{\bf{Remark:}} the limit superior in Lemma \ref{le:other} is in fact a limit. Moreover one may show that there exists $K>0$ depending only on $\mathsf{A}$ and $s$ such that
\[K^{-1}e^{nP(\mathsf{A},s)}\|v\|^s\leq \sum_{i_1,\ldots,i_{n}=1}^N \|A_{i_{n}}\cdots A_{i_1}v\|^s \leq Ke^{nP(\mathsf{A},s)}\|v\|^s\]
for all $n \geq 1$ and $v \in \mathbb{R}^d$. This however requires substantially more effort to prove than the preceding lemma, which is sufficient for our purposes.

The following corollary of Lemma \ref{le:other} will be used in both the proof of Proposition \ref{pr:dont-swear-in-front-of-the-copy-editor} and the proof of Corollary \ref{co:quod-erat-pedicandus}:
\begin{lemma}\label{le:blox}
Let $\mathsf{A}=(A_1,\ldots,A_N)\in M_d(\mathbb{R})^N$ be irreducible and let $s>0$, and suppose that there exists an integer $n \geq 1$ such that $\mathsf{A}_n$ is block diagonal in the following sense: there exist natural numbers $d_1,\ldots,d_\ell$ such that $\sum_{i=1}^\ell d_i=d$, and matrices $A_{(i_1,\ldots,i_n)}^{(j)}$ such that 
\[A_{i_n}\cdots A_{i_1} = \left(\begin{array}{ccccc}
A_{(i_1,\ldots,i_n)}^{(1)} &0 & 0& \cdots &0 \\
0 & A_{(i_1,\ldots,i_n)}^{(2)}  & 0 & \cdots &0\\
0 & 0 & A_{(i_1,\ldots,i_n)}^{(3)} & \cdots &0\\
\vdots & \vdots & \vdots  & \ddots &\vdots \\
0 & 0 & 0  & \cdots &A_{(i_1,\ldots,i_n)}^{(\ell)} 
\end{array}\right)\]
for every $i_1,\ldots,i_n \in \{1,\ldots,N\}$, where each matrix $A_{(i_1,\ldots,i_n)}^{(j)}$ is of dimension $d_j \times d_j$. Define $\mathsf{A}_n^{(j)}$ to be the $N^n$-tuple of all possible matrices $A_{(i_1,\ldots,i_n)}^{(j)}$, and suppose that each $\mathsf{A}_n^{(j)}$ is irreducible. Then $P\left(\mathsf{A}_n^{(j)},s\right)=P(\mathsf{A}_n,s)$ for every $j=1,\ldots,\ell$.
\end{lemma}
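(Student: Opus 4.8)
The plan is to exploit the fact that $\mathsf{A}_n$ is block diagonal together with the characterisation of $P(\mathsf{A}_n,s)$ afforded by Lemma \ref{le:other}, applied not to $\mathsf{A}$ but to the irreducible tuple $\mathsf{A}_n^{(j)}$ viewed as a tuple of $N^n$ matrices of size $d_j \times d_j$. The key observation is the obvious norm inequality
\[
\max_{1 \le j \le \ell} \|A_{(i_1,\ldots,i_n)}^{(j)}\| \;\le\; \|A_{i_n}\cdots A_{i_1}\| \;\le\; \sum_{j=1}^\ell \|A_{(i_1,\ldots,i_n)}^{(j)}\|,
\]
valid for block-diagonal matrices. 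Summing the $s$-th powers over all words $(i_1,\ldots,i_n) \in \{1,\ldots,N\}^n$ and iterating over $m$-fold concatenations — so that $\mathsf{A}_{nm}$ is again block diagonal with $j$-th block given by the products of $m$ matrices from $\mathsf{A}_n^{(j)}$ — one gets, using the elementary inequality $(\sum_j x_j)^s \le \ell^s \sum_j x_j^s$ when $s \ge 1$ (or $\sum_j x_j^s$ when $s \le 1$), that for a suitable constant $c_{\ell,s}>0$,
\[
\max_{1 \le j \le \ell}\ \sum_{|w|=nm} \|A_w^{(j)}\|^s \;\le\; \sum_{|w|=nm}\|A_{i_{nm}}\cdots A_{i_1}\|^s \;\le\; c_{\ell,s}\sum_{j=1}^\ell \sum_{|w|=nm}\|A_w^{(j)}\|^s,
\]
where $A_w^{(j)}$ denotes the $j$-th diagonal block of the product indexed by the word $w$.

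Next I would take $\frac{1}{nm}\log$ of all three quantities and let $m \to \infty$. The right-hand chain of inequalities gives immediately $P(\mathsf{A}_n,s) = \max_{1\le j\le \ell} P(\mathsf{A}_n^{(j)},s)$ once one notes that $\lim_{m\to\infty}\frac{1}{nm}\log \sum_{|w|=nm}\|A_w^{(j)}\|^s = \frac{1}{n}\cdot P(\mathsf{A}_n^{(j)}, s/\,?)$ — here a small bookkeeping point arises: $P(\mathsf{A}_n^{(j)},s)$ as defined in \eqref{eq:pressure} is the pressure of the $N^n$-tuple $\mathsf{A}_n^{(j)}$, whose $m$-fold products are exactly the $A_w^{(j)}$ with $|w|=nm$, so $\lim_{m\to\infty}\frac{1}{m}\log\sum_{|w|=nm}\|A_w^{(j)}\|^s = P(\mathsf{A}_n^{(j)},s)$ and the normalisation by $nm$ versus $m$ must be tracked consistently (equivalently, work throughout with the limit along multiples of $n$, which is harmless since $P$ is a genuine limit). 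So far this only yields $P(\mathsf{A}_n,s) = \max_j P(\mathsf{A}_n^{(j)},s)$; the content of the lemma is that \emph{all} the blocks achieve this maximum.

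To upgrade $\max$ to equality — which is the main obstacle — I would invoke Lemma \ref{le:other}. Fix any $j$ and any nonzero $v \in \mathbb{R}^{d_j}$, and regard $v$ as a vector in $\mathbb{R}^d$ supported on the $j$-th block. Since the product $A_{i_{nm}}\cdots A_{i_1}$ is block diagonal, $\|A_{i_{nm}}\cdots A_{i_1}v\| = \|A_w^{(j)}v\|$ depends only on the $j$-th block. Applying Lemma \ref{le:other} to the \emph{irreducible} tuple $\mathsf{A}$ with this $v$ and with the integer $k$ replaced by $n$ (the hypothesis of Lemma \ref{le:other} allows any $k \ge 1$), we obtain
\[
P(\mathsf{A},s) = \limsup_{m\to\infty}\frac{1}{nm}\log\sum_{|w|=nm}\|A_{i_{nm}}\cdots A_{i_1}v\|^s = \limsup_{m\to\infty}\frac{1}{nm}\log\sum_{|w|=nm}\|A_w^{(j)}v\|^s.
\]
But the right-hand side is bounded above by $\frac{1}{n}P(\mathsf{A}_n^{(j)},s)$ modulo the same normalisation bookkeeping, since $\sum_{|w|=nm}\|A_w^{(j)}v\|^s \le \|v\|^s\sum_{|w|=nm}\|A_w^{(j)}\|^s$ — wait, this gives the wrong direction. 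The correct route is instead: $\|A_w^{(j)}v\| \le \|A_w^{(j)}\|\,\|v\|$ shows $P(\mathsf{A},s) \le \frac{1}{n}P(\mathsf{A}_n^{(j)},s)$ after normalising, i.e. (in the consistent normalisation) $P(\mathsf{A}_n,s) = nP(\mathsf{A},s) \le P(\mathsf{A}_n^{(j)},s)$ for \emph{every} $j$. Combined with $P(\mathsf{A}_n^{(j)},s) \le P(\mathsf{A}_n,s)$ from the block-diagonal upper bound (the right inequality in the displayed chain above), this forces $P(\mathsf{A}_n^{(j)},s) = P(\mathsf{A}_n,s)$ for all $j = 1,\ldots,\ell$, completing the proof. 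The one genuinely delicate point to write carefully is the reduction, via Lemma \ref{le:other}, from "$v$-dependent growth along the $j$-th block equals $P(\mathsf{A},s)$" to the lower bound on $P(\mathsf{A}_n^{(j)},s)$; everything else is norm comparisons for block-diagonal matrices and the elementary $(\sum x_j)^s$ inequalities, handled as in the proof of Lemma \ref{le:other}.
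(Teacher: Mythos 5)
Your proposal is correct and is essentially the paper's own argument: the key step in both is to fix a nonzero $v$ in the $j$-th invariant summand, note that $\|A_{i_{nm}}\cdots A_{i_1}v\|=\|A^{(j)}_w v\|$ by block-diagonality, and apply Lemma \ref{le:other} to the irreducible tuple $\mathsf{A}$ (with $k=n$) to see that this $v$-sum grows at rate $nP(\mathsf{A},s)=P(\mathsf{A}_n,s)$. The only difference is cosmetic: the paper finishes by applying Lemma \ref{le:other} a second time, to the irreducible block tuple $\mathsf{A}_n^{(j)}$, to identify the same quantity with $P(\mathsf{A}_n^{(j)},s)$ directly, whereas you use the elementary bounds $\|A^{(j)}_w v\|\leq\|A^{(j)}_w\|\,\|v\|$ and $\|A^{(j)}_w\|\leq\|A_w\|$ and squeeze (your mid-proof worry about the ``wrong direction'' was unfounded --- the inequality you wrote yields precisely the needed lower bound $P(\mathsf{A}_n,s)\leq P(\mathsf{A}_n^{(j)},s)$).
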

\begin{proof}
Let us write $\mathbb{R}^d=\bigoplus_{j=1}^\ell U_j$ where for each $j=1,\ldots,\ell$ the space $U_j$ is preserved by all of the matrices $A_{i_n}\cdots A_{i_1}$ and has dimension $d_j$, and the restriction of $A_{i_n}\cdots A_{i_1}$ to $U_j$ is given by the matrix $A_{(i_1,\ldots,i_n)}^{(j)}$. Let $v \in U_j$ be nonzero, then
\[\limsup_{m \to \infty} \frac{1}{m}\log \left(\sum_{i_1,\ldots,i_{nm}=1}^N \left\|A_{(i_{(m-1)n+1},\ldots,i_{mn})}^{(j)} \cdots A_{(i_1,\ldots,i_n)}^{(j)}v\right\|^s\right)= P\left(\mathsf{A}^{(j)}_n,s\right)\]
by applying Lemma \ref{le:other} to the irreducible $N^n$-tuple $\mathsf{A}_n^{(j)}$, and equally
\[\limsup_{m \to \infty} \frac{1}{m}\log \left(\sum_{i_1,\ldots,i_{nm}=1}^N \left\|A_{(i_{(m-1)n+1},\ldots,i_{mn})}^{(j)} \cdots A_{(i_1,\ldots,i_n)}^{(j)}v\right\|^s\right)\]
\[=\limsup_{m \to \infty} \frac{1}{m}\log \left(\sum_{i_1,\ldots,i_{nm}=1}^N \left\|A_{i_{mn}}\cdots A_{i_1}v\right\|^s\right)= nP(\mathsf{A},s)=P(\mathsf{A}_n,s)\]
by applying Lemma \ref{le:other} to the irreducible $N$-tuple $\mathsf{A}$.
\end{proof}

\section{Proof of Proposition \ref{pr:dont-swear-in-front-of-the-copy-editor}}

We may now prove Proposition \ref{pr:dont-swear-in-front-of-the-copy-editor}.
Fix $s>0$ throughout the proof. Let us show that the pair $\mathsf{A}:=(A_1,A_2)$ defined by 
\[A_1:=\left(\begin{array}{ccc}0&1&2\\2&0&0\\1&0&0\end{array}\right),\qquad A_2:=\left(\begin{array}{ccc}0&2&1\\1&0&0\\2&0&0\end{array}\right)\]
is irreducible. If $\{A_1,A_2\}$ preserves a two-dimensional subspace of $\mathbb{R}^2$ then the pair $\{A_1^T,A_2^T\}$ preserves its one-dimensional orthogonal complement, but since $A_1=A_2^T$ this implies that $\{A_1,A_2\}$ also preserves that same one-dimensional space. It follows that if $\mathsf{A}$ preserves a proper nonzero subspace of $\mathbb{R}^3$ then it necessarily preserves a one-dimensional subspace of $\mathbb{R}^3$. 

Suppose then that $\mathsf{A}$ is reducible and so there exists a one-dimensional subspace which is preserved by $\mathsf{A}$. By one-dimensionality the restrictions of $A_1$ and $A_2$ to that subspace must commute, so the invariant one-dimensional subspace is annihilated by $A_1A_2-A_2A_1$. But we have
\begin{equation}\label{eq:bits}A_1A_2=\left(\begin{array}{ccc}5&0&0\\0&4&2\\0&2&1\end{array}\right),\qquad A_2A_1=\left(\begin{array}{ccc}5&0&0\\0&1&2\\0&2&4\end{array}\right)\end{equation}
and therefore
\[A_1A_2-A_2A_1= \left(\begin{array}{ccc}0&0&0\\0&3&0\\0&0&-3\end{array}\right).\]
Thus the only one-dimensional subspace of $\mathbb{R}^3$ which is annihilated by $A_1A_2-A_2A_1$ is the first co-ordinate axis, but this space is transparently not an invariant subspace of $\mathsf{A}$. We conclude that no one-dimensional invariant subspace exists and therefore $\mathsf{A}$ is irreducible as claimed. In particular there exists a unique equilibrium state $\mu$ for $(\mathsf{A},s)$ and $\mu$ is ergodic with respect to $\sigma$. 

We next wish to show that $\mu$ is ergodic with respect to $\sigma^3$. To see this it is sufficient to show that $\mathsf{A}_3$ is irreducible, since then $(\mathsf{A}_3,s)$ has a unique equilibrium state $\nu$, say; since $\nu$ must be ergodic with respect to $\sigma^3$, and $\mu$ is an equilibrium state of $(\mathsf{A}_3,s)$, we have $\mu=\nu$ by uniqueness so that $\mu$ is ergodic with respect to $\sigma^3$. To show that $\mathsf{A}_3$ is irreducible it is sufficient to show that $\{A_1^3,A_2^3\}$ is irreducible: but in fact $A_i^3$ is just a nonzero scalar multiple of $A_i$ for each $i=1,2$ by direct calculation (or by applying the Cayley-Hamilton Theorem together with the observation that $\tr A_i=\det A_i=0$ and $A_i$ is not nilpotent since $A_i^2$ has nonzero trace). The irreducibility of $\mathsf{A}_3$ and the ergodicity of $\mu$ with respect to $\sigma^3$ therefore follow.

We now notice that since the matrices $A_1A_2$, $A_2A_1$ have the form \eqref{eq:bits}, and since
\[A_1^2=\left(\begin{array}{ccc}4&0&0\\0&2&1\\0&4&2\end{array}\right),\qquad A_2^2=\left(\begin{array}{ccc}4&0&0\\0&2&4\\0&1&2\end{array}\right),\]
the $4$-tuple $\mathsf{A}_2=(A_1^2,A_1A_2,A_2A_1,A_2^2)$ is reducible. Let us define
\[\mathsf{B}:=(B_{11},B_{12},B_{21},B_{22})=(4,5,5,4) \in M_1(\mathbb{R})^4\]
and
\[\mathsf{C}:=(C_{11},C_{12},C_{21},C_{22}) \in M_2(\mathbb{R})^4\]
where
\[C_{11}=\left(\begin{array}{cc}2&1\\4&2\end{array}\right),\quad C_{12}=\left(\begin{array}{cc}4&2\\2&1\end{array}\right),\quad C_{21}=\left(\begin{array}{cc}1&2\\2&4\end{array}\right),\quad C_{22}=\left(\begin{array}{cc}2&4\\1&2\end{array}\right)\]
so that $A_iA_j=B_{ij}\oplus C_{ij}$ for each $i,j \in \{1,2\}$.  Since $C_{11}$ has eigenbasis $(1,2)^T$, $(1,-2)^T$ and $C_{22}$ has eigenbasis $(2,1)^T$, $(-2,1)^T$ the matrices $C_{11}$ and $C_{22}$ have no common invariant subspace and therefore $\mathsf{C}$ is irreducible.  By Lemma \ref{le:blox} we therefore have $P(\mathsf{B},s)=P(\mathsf{C},s)=P(\mathsf{A}_2,s)$. 

Let $s>0$ and note that by Theorem \ref{th:fk} $\mathsf{B}$, $\mathsf{C}$ have one equilibrium state each, which we denote by $\mu_{\mathsf{B}}$ and $\mu_{\mathsf{C}}$ respectively. As before we view $\mu_{\mathsf{B}}$ and $\mu_{\mathsf{C}}$ as $\sigma^2$-invariant measures on $\Sigma_2$. We claim that $\mu_{\mathsf{B}} \neq \mu_{\mathsf{C}}$. Suppose for a contradiction that $\mu_{\mathsf{B}}=\mu_{\mathsf{C}}$. By Theorem \ref{th:fk} there exist constants $K_1,K_2>0$ such that 
\[ K_1^{-1}\mu_{\mathsf{B}}([i_1\cdots i_{2n}]) \leq e^{-nP(\mathsf{B},s)}\|B_{i_{2n}i_{2n-1}}B_{i_{2n-2}i_{2n-3}}\cdots B_{i_2i_1}\|^s \leq K_1\mu_{\mathsf{B}}([i_1\cdots i_{2n}]) \]
\[ K_2^{-1}\mu_{\mathsf{C}}([i_1\cdots i_{2n}]) \leq e^{-nP(\mathsf{C},s)}\|C_{i_{2n}i_{2n-1}}C_{i_{2n-2}i_{2n-3}}\cdots C_{i_2i_1}\|^s \leq K_2\mu_{\mathsf{C}}([i_1\cdots i_{2n}]) \]
for every $n\geq 1$ and $i_1,\ldots,i_{2n} \in \{1,2\}$. Applying this with $n=2k$ and with $(i_1,\ldots,i_{2n})=(1,2,2,1,1,2,2,1,\ldots,1,2,2,1)$ yields
\[K_2K_1^{-1} \leq \frac{\|(B_{12}B_{21})^k\|^s}{\|(C_{12}C_{21})^k\|^s} \leq K_1K_2^{-1} \]
since $P(\mathsf{B},s)=P(\mathsf{C},s)$ and we have assumed that $\mu_{\mathsf{B}}=\mu_{\mathsf{C}}$. This implies
\[\rho(C_{12}C_{21})=\lim_{k \to \infty} \|(C_{12}C_{21})^k\|^{\frac{1}{k}}=\lim_{k \to \infty} \|(B_{12}B_{21})^k\|^{\frac{1}{k}} =\rho(B_{12}B_{21})\]
by Gelfand's formula, where $\rho(A)$ denotes the spectral radius of $A$; but straightforward calculation shows that actually $\rho(B_{12}B_{21})=25>16=\rho(C_{12}C_{21})$, a contradiction. We conclude that $\mu_{\mathsf{B}} \neq \mu_{\mathsf{C}}$ as claimed.

It remains to show that $\mu$ is not ergodic with respect to $\sigma^2$. Since $\mu_{\mathsf{B}}$ and $\mu_{\mathsf{C}}$ are distinct from one another, at least one of them is not equal to $\mu$. Call this measure $\nu$. Since $\nu \neq \mu$, $\nu$ cannot be $\sigma$-invariant, since it would then be an equilibrium state of $(\mathsf{A},s)$ and equal $\mu$ by uniqueness. Therefore $\nu':=\frac{1}{2}\left(\nu + \sigma_*\nu\right)$ is not equal to  $\nu$. Clearly $\nu'$ is $\sigma$-invariant and is an equilibrium state of $(\mathsf{A},s)$, so $\nu'=\mu$ by uniqueness. Since $\mu=\frac{1}{2}\nu + \frac{1}{2}\sigma_*\nu$ is a $(\mathsf{A}_2,s)$-equilibrium state both $\nu$ and $\sigma_*\nu$ must be equilibrium states of $(\mathsf{A}_2,s)$. Moreover the ergodicity of $\nu$ with respect to $\sigma^2$ implies the ergodicity of $\sigma_*\nu$ with respect to $\sigma^2$ in a manner similar to the proof of Theorem \ref{th:one}. Since $\nu \neq \mu=\nu'$ the measures $\nu$ and $\sigma_*\nu$ must be distinct. We have shown that $\nu$ and $\sigma_*\nu$ are distinct $\sigma^2$-invariant equilibrium states of $(\mathsf{A}_2,s)$ which are ergodic with respect to $\sigma^2$, so they must equal $\mu_{\mathsf{B}}$ and $\mu_{\mathsf{C}}$ in some order. In particular $\mu=\frac{1}{2}\left(\mu_{\mathsf{B}}+\mu_{\mathsf{C}}\right)$ as indicated in the statement of the proposition. The proof of Proposition \ref{pr:dont-swear-in-front-of-the-copy-editor} is complete.

\section{Proof of Corollary \ref{co:quod-erat-pedicandus}}

 If $\mu$ is not ergodic with respect to $\sigma^d$ then it is not totally ergodic and in particular is not mixing. If on the other hand $\mu$ is not mixing, then by Theorem \ref{th:one} it is a proper linear combination of $\sigma^\ell$-invariant measures for some factor $\ell$ of $d$. In particular it is a proper linear combination of $\sigma^d$-invariant measures and is therefore not ergodic with respect to $\sigma^d$. This establishes (i)$\iff$(ii). The implication (iii)$\implies$(i) is trivial. In order to prove (i)$\implies$(iii) we first require a result from \cite{Mo17}:
 \begin{lemma}\label{le:anne}
 Let $\mathsf{A}=(A_1,\ldots,A_N) \in M_{d_1}(\mathbb{R})^N$ and $\mathsf{B}=(B_1,\ldots,B_N)\in M_{d_2}(\mathbb{R})^N$ be irreducible and suppose that $P(\mathsf{A},s)=P(\mathsf{B},s)$ for every $s>0$. For each  $s>0$ let $\mu_s$ and $\nu_s$ denote the unique equilibrium state of $(\mathsf{A},s)$ and $(\mathsf{B},s)$ respectively. Then $\mu_s=\nu_s$ for some $s>0$ if and only if $\mu_s=\nu_s$ for all $s>0$.
 \end{lemma}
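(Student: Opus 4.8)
The plan is to run the entire argument off the Gibbs inequality of Theorem~\ref{th:fk}, the key observation being that an equality $\mu_{s_0}=\nu_{s_0}$ at a single parameter is in fact equivalent to a \emph{parameter-free} two-sided comparison between the norms of products drawn from $\mathsf A$ and from $\mathsf B$, and that such a comparison is trivially stable under raising to an arbitrary power $s>0$.

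First I would fix $s_0>0$ with $\mu_{s_0}=\nu_{s_0}$. Since $\mathsf A$ and $\mathsf B$ are irreducible, Theorem~\ref{th:fk} provides constants $C_{\mathsf A},C_{\mathsf B}>0$ with
\[C_{\mathsf A}^{-1}\mu_{s_0}([x_1\cdots x_n])\le \frac{\|A_{x_n}\cdots A_{x_1}\|^{s_0}}{e^{nP(\mathsf A,s_0)}}\le C_{\mathsf A}\mu_{s_0}([x_1\cdots x_n])\]
and the analogous bounds with $\mathsf B$, $\nu_{s_0}$ in place of $\mathsf A$, $\mu_{s_0}$. Chaining these two pairs of inequalities, using $\mu_{s_0}=\nu_{s_0}$ and the hypothesis $P(\mathsf A,s_0)=P(\mathsf B,s_0)$ (so that the pressure terms cancel), and setting $D:=(C_{\mathsf A}C_{\mathsf B})^{1/s_0}$, one obtains
\[D^{-1}\le \frac{\|A_{x_n}\cdots A_{x_1}\|}{\|B_{x_n}\cdots B_{x_1}\|}\le D\]
for every finite word $x_1\cdots x_n$ --- a statement in which neither $s_0$ nor the pressures appear.

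Next, for an arbitrary $s>0$ I would raise the last display to the power $s$ and substitute it back into the Gibbs inequalities of Theorem~\ref{th:fk} for $(\mathsf A,s)$ and $(\mathsf B,s)$, using $P(\mathsf A,s)=P(\mathsf B,s)$ once more, to produce a constant $D_s>0$ with $D_s^{-1}\nu_s([w])\le\mu_s([w])\le D_s\nu_s([w])$ for every cylinder $[w]$. Since every open subset of $\Sigma_N$ is a countable union of cylinders, outer regularity of Borel measures on this compact metrizable space upgrades this cylinderwise comparison to mutual absolute continuity of $\mu_s$ and $\nu_s$. Finally, both $\mu_s$ and $\nu_s$ are ergodic with respect to $\sigma$ by Theorem~\ref{th:fk}, and two ergodic $\sigma$-invariant probability measures that are not mutually singular must coincide --- if $\mu_s(E)\ne\nu_s(E)$ for some Borel set $E$, Birkhoff's ergodic theorem applied to $\mathbf{1}_E$ exhibits disjoint sets of full measure for the two measures --- so $\mu_s=\nu_s$. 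As $s>0$ was arbitrary this gives the forward implication, and the converse is immediate.

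The step I expect to be the crux is the cancellation in the second paragraph: it is this observation that reduces the coincidence of equilibrium states at one value of $s$ to a scale-free estimate between the two matrix semigroups, after which every other value of $s$ is handled identically. The passage from cylinderwise comparability through mutual absolute continuity to equality is routine measure theory and ergodic theory.
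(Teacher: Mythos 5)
Your argument is correct, but it is not the route the paper takes: the paper disposes of this lemma in one line by citing the equivalence (i)$\iff$(iii) of Theorem 9 of \cite{Mo17}, whereas you give a direct, self-contained proof from the Gibbs inequality of Theorem \ref{th:fk}. Your key intermediate step --- that $\mu_{s_0}=\nu_{s_0}$ together with $P(\mathsf{A},s_0)=P(\mathsf{B},s_0)$ forces the parameter-free comparison $D^{-1}\|B_{x_n}\cdots B_{x_1}\|\le\|A_{x_n}\cdots A_{x_1}\|\le D\|B_{x_n}\cdots B_{x_1}\|$ for all words --- is essentially the scale-free condition that the cited theorem of \cite{Mo17} places between ``equal for one $s$'' and ``equal for all $s$'', so in spirit you are reconstructing the external result rather than merely invoking it; what this buys is transparency and independence from \cite{Mo17}, at the cost of a little routine measure theory (the chaining of Gibbs bounds at $s_0$, the re-substitution at arbitrary $s$ with fresh constants, the extension from cylinders to Borel sets via disjointification of cylinder covers and outer regularity, and the ergodicity-plus-nonsingularity-implies-equality step are all sound). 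One cosmetic caveat: since irreducible tuples may contain non-invertible matrices, a product $B_{x_n}\cdots B_{x_1}$ can vanish, so the comparison should be stated in the two-sided product form above rather than as a bound on the ratio; when the $\mathsf{B}$-product vanishes the Gibbs upper bound forces the corresponding cylinder to be null and the $\mathsf{A}$-product to vanish as well, so nothing in your argument is affected.
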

 \begin{proof}
 This follows from the equivalence of statements (i) and (iii) of Theorem 9 in \cite{Mo17}.
 \end{proof}
 
 Let us now prove (i)$\implies$(iii). If $\mu$ is not mixing then by Theorem \ref{th:one} there exist $\ell>1$ distinct ergodic equilibrium states for $(\mathsf{A}_\ell,s)$, one of which comes from each of the $\ell$ irreducible diagonal blocks in \eqref{eq:ftoomch}. By Lemma \ref{le:blox} the pressure $P(\cdot,t)$ of each of these $\ell$ diagonal blocks is equal to $P(\mathsf{A}_\ell,t)$ for every parameter value $t>0$. It follows by Theorem \ref{th:fk} that for each $t>0$, each of the $\ell$ diagonal blocks of $\mathsf{A}_\ell$ contributes  one ergodic equilibrium state of $(\mathsf{A}_\ell,t)$. By Lemma \ref{le:anne} these $\ell$ measures must be distinct for every $t>0$ since they are distinct for $t=s$. If $t>0$ is fixed, let $\nu_1,\ldots,\nu_\ell$ be the distinct ergodic equilibrium states of $(\mathsf{A}_\ell,t)$. By the irreducibility of $\mathsf{A}$ there is a unique equilibrium state $\nu$ for $(\mathsf{A},t)$. If any $\nu_j$ is $\sigma$-invariant then it must be an equilibrium state of $(\mathsf{A},t)$ and hence must equal $\nu$. Since $\ell>1$ and the measures $\nu_i$ are distinct there therefore exists $j$ such that $\nu_j \neq \nu$ and in particular such that $\sigma_*\nu_j \neq \nu_j$. The measure $\frac{1}{\ell}\sum_{i=0}^{\ell-1}\sigma_*^i\nu_j$ is a $\sigma$-invariant equilibrium state for $\mathsf{A}_\ell$ and is therefore equal to the equilibrium state $\nu$ of $(\mathsf{A},t)$. Since $\sigma_*\nu_j \neq \nu_j$, it follows that $\nu=\frac{1}{\ell}\sum_{i=0}^{\ell-1}\sigma_*^i\nu_j$ is a proper linear combination of at least two distinct $\sigma^\ell$-invariant measures, and hence is not ergodic with respect to $\sigma^\ell$. In particular $\nu$ is not totally ergodic with respect to $\sigma$ and is therefore not mixing with respect to $\sigma$. Since $t>0$ was arbitrary this completes the proof of (iii) and hence completes the proof of the corollary.
 
\bibliographystyle{acm}
\bibliography{mixoe}

\end{document}